\renewcommand\footnotetextcopyrightpermission[1]{} 
\setlist{noitemsep,topsep=0pt,parsep=0pt,partopsep=0pt,leftmargin=*}
\theoremstyle{remark}
\newcommand{\bbR}{\mathbb{R}}
\newcommand{\calS}{\mathcal{S}}
\newcommand{\calR}{\mathcal{R}}
\newcommand{\bei}{\begin{itemize}}
\newcommand{\eei}{\end{itemize}}
\newcommand{\bee}{\begin{enumerate}}
\newcommand{\eee}{\end{enumerate}}
\newcommand{\calI}{\mathcal{I}}
\newcommand{\calK}{\mathcal{K}}
\newcommand{\calE}{\mathcal{E}}
\newcommand{\bx}{\mathbf{x}}
\newcommand{\bb}{\mathbf{b}}
\newcommand{\bud}{m}
\newcommand{\INDSTATE}[1][1]{\STATE\hspace{#1\algorithmicindent}}
\begin{document}

\title{Optimal  Bidding, Allocation and Budget Spending for a Demand Side Platform Under Many Auction Types}

\author{Alfonso Lobos, Paul Grigas}
\affiliation{%
  \institution{University of California, Berkeley}
}
\email{alobos,pgrigas@berkeley.edu}

\author{Zheng Wen}
\affiliation{%
  \institution{Adobe Research}
}
\email{zwen@adobe.com}

\author{Kuang-chih Lee}
\affiliation{%
  \institution{Alibaba Inc.}
}  
\email{kuang-chih.lee@alibaba-inc.com}

\begin{abstract}
We develop a novel optimization model to maximize the profit of a Demand-Side Platform (DSP) while ensuring that the budget utilization preferences of the DSP's advertiser clients are adequately met. 
Our model is highly flexible and can be applied in a Real-Time Bidding environment (RTB) with \emph{arbitrary} auction types, e.g., both first and second price auctions.
Our proposed formulation leads to a non-convex optimization problem due to the joint optimization over both impression allocation and bid price decisions. Using Fenchel duality theory, we construct a dual problem that is convex and can be solved efficiently to obtain feasible bidding prices and allocation variables that can be deployed in a RTB setting.
With a few minimal additional assumptions on the properties of the auctions, we demonstrate theoretically that our computationally efficient procedure based on convex optimization principles is guaranteed to deliver a globally optimal solution.
We conduct experiments using data from a real DSP to validate our theoretical findings and to demonstrate that our method successfully trades off between DSP profitability and budget utilization in a simulated online environment.
\end{abstract}

%
%

\keywords{Demand-Side Platforms; Real-Time Bidding; Online Advertising; Optimization}

\maketitle

\section{Introduction}\label{sec:intro} 


In targeted online advertising, the main goal is to figure out the best opportunities by showing an advertisement to an online user, who is most likely to take a desired action, such as ordering a product or signing up for an account.  Advertisers usually use the service of companies called demand-side platforms (DSP) to achieve this goal.

In a DSP, each individual advertiser sets up a list of campaigns that can be thought of as plans for delivering advertisements. For each campaign, the advertiser specifies the characteristics of the audience segments that it would like to target (e.g., males, ages 18-35, who view news articles on espn.com) along with the particular media that it would like to display to the target audience (e.g., a video ad for beer). In this work we will call an impression type to a specific collection of those attributes (\textit{e.g.}, male, California, interested in sports). In addition, the advertiser specifies a budget amount, time schedule, pacing details, and performance goals for each campaign. The performance goals typically can be specified by minimizing cost-per-click (CPC) or cost-per-action (CPA). 

The DSP manages its active campaigns for many different advertisers simultaneously across multiple ad exchanges where ad impressions can be acquired through a real-time bidding (RTB) process. In the RTB process, the DSP interacts with several ad exchanges where bids are placed for potential impressions on behalf of those advertisers. This interaction happens in real time when an ad request is submitted to an ad exchange (which may happen, for example, when a user views a news story on a webpage). In this scenario, the DSP needs to offer a solution to decide, among the list of all campaigns associated with its advertiser clients, which campaign to bid on behalf of and the bid values. 

The advertisers who work with the DSP expect its budget to be spent fully or at least in an adequate amount as their marketing areas count on it. Failure to do so may motivate an advertiser to stop working with the DSP in the future, which is unacceptable for its business. In addition, they would like their budget to be spend smoothly if possible. Then, the DSP faces the problem of maximizing its profit while ensuring an adequate budget spending for its advertisers clients. 

DSPs can charge their clients using several pricing schemes, for example in a CPM format advertisers are charged a fixed amount per thousand of impressions showed to users (which is mostly used for branding of products). If the advertisers are interested in some click or action of interest, they may pay in CPM scheme, but requiring that no more than certain amount per click or action of interest is paid (action of interest could be filling a form, purchasing a product, etc.). In this work, we we will assume that the DSP gets paid only when a click or action of interest occurs, but has to pay to the ad exchanges for each impression it acquires. This is a challenging payment setting as the DSP may have a negative operation if the actions or clicks of interest don't occur at the rates the DSP expects. It is important to mention that DSPs  usually receive millions of ad requests opportunities per minute, and their bidding systems needs to respond to each of this ad request in matter of milliseconds making most companies apply simple heuristics to bid in the RTB systems. To simplify notation we will assume in this work that the advertisers are interested in clicks of interest, while this work apply in general to any action of interest. 

As a final remark ad exchanges may use different auctions types to sell advertisement opportunities. As an example, several ad exchanges such as OpenX, AppNexus have announced that they use first price auctions, \textit{i.e.} the highest bidder pays the ad exchange the amount it offered, while others like Google's AdX have announced that they use second price auctions which is that the highest bidder pays the second highest bid submitted to the ad exchange. This add an extra layer to any general DSP optimization algorithm that may want to bid in different ad exchanges for the same advertisers. 



In this paper, we propose a novel approach to maximize the DSP profit while ensuring an adequate budget spending for its advertisers clients. We take into account that the DSP may bid in different ad exchanges who may use different auction rules. Appropriately modeling the impression arrival, auction, and click/action processes our non-convex model gives as an output bidding and allocation vectors that can be used in real time by a DSP to bid in RTB environments. To solve our model we propose a dual formulation using Fenchel conjugates and derive a two-phase primal-dual procedure to solve our non-convex problem. We show that the solutions given by our solution procedure are optimal for several first and second price auctions, results that up to our knowledge are novel in the literature. Experimental results show how our methodology is able to trade off DSP profitability for better budget spending for first and second price auctions in synthetic data, and data based on a real DSP operation.


Due to space limitations we only review works very close to ours, and of those who we take ideas from. In terms of finding biding and allocation schemes different schemes have been suggested in the literature from the ad exchange point of view  \cite{balseiro2014yield,chen2014dynamic,grigas2017profit}, and from the DSP side \cite{perlich2012bid,chen2011real}. In terms of spending the advertisers budget adequately \cite{lee2013real,xu2015smart} set smart pacing strategies. Strategies for bidding using Lagrangian schemes for DSPs have appeared \cite{zhang2014optimal,ren2018bidding} and who use the Ipinyou dataset to validate their results \cite{zhang2014real} as us. Here we formulate a dual problem using the concept  of Fenchel conjugates \cite[p.~91]{boyd2004convex}, which we solve using standard subgradients methods. Our results are similar in spirit to the  recent work \cite{wang2017vanishing}. The latter studies a non-convex multi-agent optimization problem and also uses Fenchel conjugates to construct a dual problem. Our work differs from the latter as we are able to obtain stronger theoretical results in comparison to \cite{wang2017vanishing} using the structure of the online advertising problem studied here (which makes our proofs unique). 


The rest of the paper is organized as follows. 
In Section \ref{sec:model}, we describe the notation and problem statement and we set up the model. Section \ref{sec:optalg} show our proposed optimization problem. In Section \ref{sec:LAD} we propose a dual for our problem of interest, showing several properties of it and proposing a two-phase primal-dual scheme. In Section \ref{Sec:ZD} we show important optimality results and propose two-phase primal-dual scheme to solve our problem of interest. Experimental results using the Ipinyou data \cite{zhang2014real} are presented in Section \ref{sec:expts}, and we mention some future work directions. Section \ref{Sec:Proofs} is the appendix which has the proofs to all propositions and theorems shown in this work.



\section{Model Foundations}\label{sec:model}



Let us begin by describing the basic structure and flow of events in the model. Let $\calK$ denote the set of all campaigns associated with advertisers managed by the DSP.
The DSP interacts with several ad exchanges, and recall that each auction held by one of these ad exchanges represents an opportunity to show an ad to a particular impression (i.e., a user). Although there may be billions of possible impression opportunities each day, we assume that the DSP uses a procedure for mapping each impression opportunity to an \emph{impression type}. Let $\calI$ denote the set of all such impression types. Whenever an opportunity for an impression of type $i \in \calI$ arrives to one of the ad exchanges, the DSP has to make two real-time strategic decisions related to the corresponding auction:  \emph{(i)} how to select a campaign $k \in \calK$ to bid on behalf of in the auction, and \emph{(ii)} how to set the corresponding bid price $b_{ik}$. If the DSP wins the auction on behalf of campaign $k$, then the DSP pays the corresponding market price (which depends on the auction type) to the ad exchange, and an ad from campaign $k$ is displayed to the user. The advertiser corresponding to campaign $k$ is charged only if the user clicks on the ad.
\vspace{-2mm}
\paragraph{Key Parameters for Impression Types and Campaigns}
Our model presumes that the DSP has knowledge (or estimates) of the following parameters:
\begin{itemize}
\item $s_i$ is the expected number of impressions of type $i$ that will arrive during the planning horizon.
\item $\bud_k$ is the (advertiser selected) budget for campaign $k$ during the planning horizon.
\item $\calI_k$ denotes the set of impression types that campaign $k$ targets. (Note that each advertiser can create multiple campaigns to achieve different targeting goals.)
\item $q_k > 0$ is the CPC (cost per click) price for campaign $k$, i.e., the amount charged to the associated advertiser each time a user clicks on an advertisement from campaign $k$.
\end{itemize}
\vspace{-2mm}
\paragraph{Auction Modeling}
We take a flexible approach to auction modeling. In particular, we simply presume that, for each impression type $i \in \calI$, the DSP has constructed the following two bid landscape \cite{cui2011bid} functions (which include first and second price auctions):
\begin{itemize}
\item $\rho_i(b)$ -- the probability of winning an auction for an impression of type $i \in \calI$ given that the DSP submitted a bid of $b$. 
\item $\beta_i(b)$ -- the expected amount the DSP pays the ad exchange, conditional on the DSP winning the auction with a submitted bid of $b$. 
\end{itemize}
We will assume $\rho_i(b)$ and $\beta_i(b)$ to be non-decreasing functions, and $\beta_i(b) \le b$ for all $b \ge 0$ and $i \in \calI$.
\vspace{-2mm}
\paragraph{Click Events}
Whenever an ad of campaign $k \in \calK$ is shown to an impression of type $i \in \calI$ (after the DSP wins the corresponding auction), we presume that a click event happens with probability $\theta_{ik}$. In other words, $\theta_{ik}$ is the expected \emph{click-through-rate}.
%
%
In addition, given an impression type $i \in \calI$ and a campaign $k \in \calK$, let $r_{ik}$ denote the corresponding \emph{expected revenue} earned by the DSP, which is the same as the \emph{expected cost per impression (eCPI)} to the advertiser. Namely, it holds that $r_{ik} := q_k\theta_{ik}$ where $q_k$ is the CPC price defined earlier. 
\vspace{-2mm}
\paragraph{Decision Variables and Additional Notation}
When the DSP has the opportunity to participate in an auction for an impression of type $i \in \calI$ it needs to decide which campaign $k \in \calK$ to bid on behalf of and the bid value to submit. Let $\calE \subseteq \calI \times \calK$ denote the edges of an undirected bipartite graph between $\calI$ and $\calK$, whereby there is an edge $e = (i, k) \in \calE$ whenever campaign $k$ targets impression type $i$, i.e, $\calE := \{(i, k) : i \in \calI_k\}$. Let $\calK_i := \{k \in \calK : (i, k) \in \calE\}$ denote the set of campaigns that target impression type $i$.  For each edge $(i,k) \in \calE$, we define two decision variables:  {\em (i)} $x_{ik}$ the probability that the DSP selects campaign $k$, and {\em (ii)} $b_{ik}$ the bid value to submit to the auction.  Interpreted differently, $x_{ik}$ represents a proportional allocation, i.e., the fraction of auctions for impression type $i$ that are allocated to campaign $k$ on average. (The fraction of impression type $i$ auctions for which the DSP decides to not bid is $1- \sum_{k \in \calK_i} x_{ik}$.) Note that $b_{ik}$ represents the bid price that the DSP submits to an auction for impression type $i$ \emph{conditional} on the fact that the DSP has selected campaign $k$ for the auction.  Let $\bf{x}, \bf{b} \in \bbR^{|\calE|}$ denote vectors of these quantities, which will represent decision variables in our model.

Let us also define some additional notation used herein. For a given set $S$ and a function $f(\cdot) : S \to \bbR$, let $\arg\max_{x \in S} f(x)$ denote the (possibly empty) set of maximizers of the function $f(\cdot)$ over the set $S$. If $f(\cdot) : \bbR^n \to \bbR$ is a convex function then, for a given $x \in \bbR^n$, $\partial f(x)$ denotes the set of subgradients of $f(\cdot)$ at $x$, i.e., the set of vectors $g$ such that $f(y) \geq f(x) + g^T(y - x)$ for all $y \in \bbR^n$. Finally, let $[\cdot]_+$ be the function that returns the maximum between the input and 0, and $'$ denote a derivative in the right context.

\section{Optimization Formulation}
\label{sec:optalg}
Let us begin by recalling the model developed in \cite{grigas2017profit} (proposed only for second price auctions there), which aims to maximize the profit of the DSP under budget constraints:
\begin{align}\label{old-poi}
\underset{ \textbf{x,b}}{\max} &   \sum_{(i,k) \in \calE} [r_{ik} - \beta_i(b_{ik})]s_ix_{ik}\rho_i(b_{ik}) &  \nonumber \\
\text{ subject to } & \underset{i \in \calI_k}{\sum} r_{ik}s_i x_{ik} \rho_i(b_{ik}) \le m_k \ \  \text{for all } k \in \calK \\
 & 0 \le  b_{ik} \le \bar{b}_{i}  \ \ \text{for all } (i,k) \in \calE \nonumber \\
&  \sum_{k \in \calK_i} x_{ik} \le 1 \ \  \text{for all } i \in \calI \nonumber \\
 & x_{ik} \ge 0 \ \ \text{for all } (i,k) \in \calE \nonumber
\end{align}
The first set of constraints above specify that the expected budget spent by each campaign should be less than the total available budget.  The second set of constraints bounds the range of the bid prices, and the third and fourth set of constraints ensure that $\textbf{x}$ represents a valid probability vector when restricted to each impression type. The objective function is the expected DSP profit, which we aim to maximize. Indeed, note that for each pair $(i,k) \in \calE$ the quantity $r_{ik} - \beta_i(b_{ik})$ is the expected profit earned by the DSP whenever an ad of campaign $k$ is show to an impression of type $i$, and $s_ix_{ik}\rho_i(b_{ik})$ is the expected number of impression of type $i$ that we will acquire on behalf of campaign $k$.
Therefore, $[r_{ik} - \beta_i(b_{ik})]s_ix_{ik}\rho_i(b_{ik})$ is the expected profit due to bidding for impressions of type $i$ on behalf of campaign $k$, and summing these quantities over all pairs $(i,k) \in \calE$ yields the total expected profit for the DSP, which we call $\pi(\bx,\bb)$.

Notice that the previous formulation does \emph{not} ensure or even encourage an adequate budget spending for the campaigns, it only ensures that each campaign does not spend in expectation more than its total budget. 
In reality, advertisers are not satisfied by merely ensuring that their spending on each campaign is below the specified budget level. Rather, most advertisers view the budget value $m_k$ as a ``target'' and may have complex preferences regarding their spending behaviors. For example, an advertiser may be very dissatisfied with underspending behavior and may in fact prefer slightly overspending above the budget value $m_k$ instead of severely underspending well below $m_k$. 

In order to greatly enhance the flexibility of our model as well as its ability to capture complicated budget spending preferences, we replace the budget constraints in \eqref{old-poi} with a more general utility function model as follows. First, note that the expected total spending of campaign $k \in \calK$, as a function of the decision variables, is given by $v_k(\bx, \bb) := \underset{i \in \calI_k}{\sum} r_{ik}s_i x_{ik} \rho_i(b_{ik})$. Now, let $u_k(\cdot) : \bbR_{+} \to \bbR$ be a concave utility function describing the budget spending preferences of campaign $k$, whereby $u_k(v_k)$ is the ``utility" of campaign $k$ when its expected spending level is $v_k$. Furthermore, define the vector of expected spending levels $v(\bx, \bb) \in \mathbb{R}^{|\calK|}$ whose $k^{\text{th}}$ coordinate is $v_k(\bx, \bb)$, and let $u(\cdot) : \mathbb{R}^{|\calK|} \rightarrow \mathbb{R}$ be the overall budget spending utility function whereby $u(v(\bx, \bb)) = \sum_{k \in \calK} u_k(v_k(\bx, \bb))$. Finally, as an extra way to simplify notation let's define the feasible set of allocation and bidding variables:
$$\calS := \left\{(\bx, \bb) : \sum_{k \in \calK_i} x_{ik} ~\leq~ 1 \ \text{for all } i \in \calI, \ 0 \le b_{ik} \le \bar{b}_{i} \ \text{for all } (i,k) \in \calE, \ \bx \geq 0\right\} \  $$

We are now ready to state our proposed optimization model: 
\begin{align}
F^* := \underset{ (\textbf{x,b}) \in S}{\max}    \sum_{(i,k) \in \calE} [r_{ik} - \beta_i(b_{ik})]s_ix_{ik}\rho_i(b_{ik}) + u(v(\bx, \bb)) \label{poi_deterministic}
\end{align}
Note that problem \eqref{poi_deterministic} is non-convex, and in Section \ref{sec:LAD} we propose a computationally efficient procedure based on convex duality. We finish this section by showing three examples of utility functions that illustrate the improved generality and flexibility of model \eqref{poi_deterministic}.
\begin{description}\label{Ex:UF}
\item[Examples of Utility Functions]
\end{description}
\begin{enumerate}
  \item Formulation \eqref{old-poi} may be recovered as a special case of the more general problem \eqref{poi_deterministic} by letting $u_k(\cdot)$ be the (extended real valued) concave function such that $u_k(v_k)$ equals $-\infty$ if $v_k$ is strictly greater than $m_k$, and $0$ otherwise.
  \item If we want to maximize the DSP profit but also try to enforce an appropriate target spending for a campaign $k \in \calK$, we can take $u_k(\cdot)$ to be the concave function such that $u_k(v_k)$ equals $-\infty$ if $v_k$ is strictly greater than $m_k$, and $-\frac{\tau_k}{2} \lVert v_k - m_k \rVert_2^2$ otherwise. Here $\tau_k \ge 0$ is a user defined penalization constant. 
  \item If we want to maximize the DSP profit while requiring both a minimum and maximum expected spending for campaign $k \in \calK$, we can take $u_k(\cdot)$ to be the concave function such that $u_k(v_k)$ equals $-\infty$ if $v_k$ is strictly greater than $m_k$ or strictly less than $\alpha_k m_k$, and $0$ otherwise. Here, the parameter $\alpha_k \in [0,1]$ is user defined and represents the minimum percentage of expected budget spending. 
\end{enumerate}
Note that the model \eqref{poi_deterministic} allows for each campaign to have its own distinct utility function $u_k(\cdot)$, and therefore the three examples above may be combined together across the different campaigns, for example. Finally, note also that the separable structure of the utility function $u(\cdot)$, whereby $u(\cdot) = \sum_{k \in \calK} u_k(\cdot)$, is actually not needed for all of the results that we develop herein. Indeed, the only crucial assumption about $u(\cdot)$ is that $u(\cdot)$ is a concave function. However, the separable structure is quite natural and all of our examples do have this separable structure as well, so for ease of presentation we present the model in this way.

\section{Dual Optimization Problem and Scheme}\label{sec:LAD}
We begin this section with a high-level description of our approach for solving \eqref{poi_deterministic}. Our algorithmic approach is based on a two phase procedure. In the first phase, we construct a suitable dual of \eqref{poi_deterministic}, which turns out to be a convex optimization problem that can be efficiently solved with most subgradient based algorithms. A solution of the dual problem naturally suggests a way to set the bid prices $\mathbf{b}$. In the second phase, we set the bid prices using the previously computed dual solution then we solve a convex optimization problem that results when $\bb$ is fixed in order to recover allocation probabilities $\bx$.  A very mild assumption we make for the rest of the paper is that $F^*>-\infty$, otherwise there would be no optimization problem to optimize. 

As we mentioned before we have assumed that our utility function $u(\cdot)$is concave, therefore $-u(\cdot)$ is a convex function and we can define $p(\cdot): \mathbb{R}^{|\calK|} \rightarrow \mathbb{R}$ its convex conjugate as $p(\lambda) : = \sup_{v \in \mathbb{R}^{|\calK|}} \left\lbrace \lambda^Tv + u(v) \right\rbrace$ which is a convex function. The Fenchel Moreau Theorem \cite[p.~91]{boyd2004convex} ensures that $u(v) = \inf_{z \in \mathbb{R}^\calK} \left\lbrace  p(z) - z^Tv \right\rbrace$. Using the latter we can re-write our primal problem as (here we use $\pi(\bx,\bb):= \sum_{(i,k) \in \calE} [r_{ik} - \beta_i(b_{ik})]s_ix_{ik}\rho_i(b_{ik})$): 
\begin{align}
F^* : = \underset{ (\textbf{x,b}) \in S }{\max}   F(\textbf{x,b}) : = \left\lbrace \pi(\bx,\bb) + \underset{ \lambda \in \mathbb{R}^\calK }{\inf} \left( - \lambda^T  v(\bx, \bb) +p(\lambda) \right) \right\rbrace  \nonumber
\end{align}
For a given $\lambda \in \mathbb{R}^{|\calK|}$ we  will define the dual function as:
\begin{align*}
Q(\lambda) : =  \underset{ (\textbf{x,b}) \in S }{\sup} \left( \pi(\bx,\bb) -   \lambda^T v(\bx, \bb)  \right) +p(\lambda) \nonumber
\end{align*}
And the dual problem as:
\begin{align*}
Q^* := \underset{ \lambda \in \mathbb{R}^{|\calK|} }{ \min}  \ \ Q(\lambda)
\end{align*}
Then, the following inequalities hold for our primal and dual formulations (they follow from the max-min inequality  \cite[p.~238]{boyd2004convex}):
\begin{align*}
Q(\lambda) \ge Q^* \ge F^* \ge F(\textbf{x,b}) \ \ \text{for all } \lambda \in \mathbb{R}^{|\calK|}, (\textbf{x},\textbf{b}) \in S 
\end{align*}
Let's now define $h_i(z,b):= (z-\beta_i(b)\rho_i(b)$ the expected profit the DSP receives from showing an ad of campaign $k$ to an impression of type $i$ which has an expected revenue of \$$z$ when submitting a bid of value \$$b$. Then, given a fixed $\lambda \in \mathbb{R}^{|\calK|}$ the dual function $Q(\lambda)$ can re-defined as:
$$Q(\lambda) : = \underset{(\textbf{x},\textbf{b}) \in S}{\textrm{maximize}} \ \ \sum_{k \in \calK} \sum_{i \in \calI} h_i(r_{ik}(1-\lambda_k),b)s_ix_{ik} +p(\lambda)$$
\begin{proposition}[Efficient computation of $Q(\lambda)$]\label{Prop:MaxQ}
Given a dual variable $\lambda$, an optimal solution $(\textbf{x}(\lambda),\textbf{b}(\lambda))$ for the dual function $Q(\lambda)$ can be found using Algorithm \ref{Alg:MaxQ}.
\end{proposition}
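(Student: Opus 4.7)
The plan is to exploit the separable structure of the objective of $Q(\lambda)$ over both impression types and individual edges in $\mathcal{E}$. Since $p(\lambda)$ is constant with respect to $(\mathbf{x}, \mathbf{b})$ and the constraints in $\mathcal{S}$ couple variables only \emph{within} a given impression type $i$ (via the simplex-like constraint $\sum_{k \in \mathcal{K}_i} x_{ik} \le 1$ and the box constraint on the corresponding bids), the maximization of $Q(\lambda)$ splits into $|\mathcal{I}|$ independent subproblems, one for each $i \in \mathcal{I}$.

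For a fixed impression type $i$, the subproblem reads
\begin{equation*}
\max \ \sum_{k \in \mathcal{K}_i} s_i\, h_i\bigl(r_{ik}(1-\lambda_k),\, b_{ik}\bigr)\, x_{ik} \quad \sbt \ \sum_{k \in \mathcal{K}_i} x_{ik} \le 1,\ x_{ik} \ge 0,\ 0 \le b_{ik} \le \bar b_i.
\end{equation*}
The first key observation is that each bid variable $b_{ik}$ appears only in its own summand, multiplied by the nonnegative weight $s_i x_{ik}$. Therefore, regardless of the eventual choice of $x_{ik}$, an optimal bid can be computed independently for each edge as
\begin{equation*}
b_{ik}(\lambda) \in \arg\max_{b \in [0,\bar b_i]} \ h_i\bigl(r_{ik}(1-\lambda_k),\, b\bigr) = \arg\max_{b \in [0,\bar b_i]} \bigl(r_{ik}(1-\lambda_k) - \beta_i(b)\bigr)\rho_i(b),
\end{equation*}
which I expect is exactly what the first phase of Algorithm \ref{Alg:MaxQ} does. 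This step is the main obstacle in that $h_i(z,\cdot)$ is generally nonconcave (since $\rho_i$ and $\beta_i$ are only monotone), but it is a one-dimensional, bounded optimization and admits closed-form or near-closed-form expressions in the standard first- and second-price auction models highlighted in the paper, so we simply assume it can be evaluated.

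Define the scalar coefficients $c_{ik}(\lambda) := s_i\, h_i\bigl(r_{ik}(1-\lambda_k), b_{ik}(\lambda)\bigr)$, which are optimal values of the inner bid problem. Substituting them back, the remaining per-impression-type problem becomes the linear program
\begin{equation*}
\max \ \sum_{k \in \mathcal{K}_i} c_{ik}(\lambda)\, x_{ik} \quad \sbt \ \sum_{k \in \mathcal{K}_i} x_{ik} \le 1,\ x_{ik} \ge 0,
\end{equation*}
whose solution is trivial: pick any $k^\star_i \in \arg\max_{k \in \mathcal{K}_i} c_{ik}(\lambda)$; if $c_{ik^\star_i}(\lambda) > 0$ set $x_{ik^\star_i}(\lambda) = 1$ and $x_{ik}(\lambda)=0$ for all other $k \in \mathcal{K}_i$, otherwise set $x_{ik}(\lambda)=0$ for all $k \in \mathcal{K}_i$. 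This is the second phase I expect in Algorithm \ref{Alg:MaxQ}.

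To conclude, the proof will simply verify that (i) the constructed $(\mathbf{x}(\lambda), \mathbf{b}(\lambda))$ lies in $\mathcal{S}$ by inspection of the constraints, and (ii) its objective value matches the joint supremum, which follows because we obtained it by independently maximizing over the bid variables (valid since they decouple and multiply nonnegative coefficients) and then maximizing exactly over the allocation variables in the resulting LP. The overall supremum in the definition of $Q(\lambda)$ is therefore attained, justifying the use of $\max$ in the restated form of $Q(\lambda)$ and proving correctness of the algorithm.
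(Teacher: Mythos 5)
Your proposal is correct and follows essentially the same route as the paper's proof: observe that $p(\lambda)$ is constant, decouple the bid variables edge-by-edge because each $b_{ik}$ multiplies the nonnegative weight $s_i x_{ik}$, and then solve the resulting trivial per-impression-type linear program over the allocation variables. The only caveat (shared with the paper, which also does not address it) is that attainment of the one-dimensional $\arg\max$ over $b \in [0,\bar b_i]$ is taken for granted even though $\rho_i$ and $\beta_i$ are only assumed monotone.
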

\floatname{algorithm}{Algorithm} 
\begin{algorithm} 
\caption{Computing $(\textbf{x}(\lambda),\textbf{b}(\lambda)) \in \arg Q(\lambda)$ for a fixed $\lambda$} \label{Alg:MaxQ}
\begin{algorithmic}
\STATE {\bf Input:} $\lambda \in \bbR^{|\calK|}$.\\
\STATE 1. For each $(i,k) \in \calE$,  set:
\vspace{-1mm}
\begin{equation*}
\begin{split}
&b(\lambda)_{ik} \gets \arg\max_{b \in [0,\bar{b}_i]} \ \ h_i(r_{ik}(1-\lambda_k),b)  , \text{ and } \\
\vspace{-3mm}
&\pi(\lambda)_{ik} \gets h_i(r_{ik}(1-\lambda_k),b(\lambda)_{ik}) s_i
\end{split}
\end{equation*}
\vspace{-2mm}
\STATE 2. For each $i \in \calI$, pick $k_i^* \in \underset{k \in \calK_i}{\max} \{ \pi(\lambda)_{ik} \}$ arbitrary. Set $x(\lambda)_{ik}=0$ for all $k \in \calK_i \ne k_i^*$, and if $\pi(\lambda)_{ik_i^*}>0$ make $x(\lambda)_{ik_i^*} = 1$ otherwise $x(\lambda)_{ik_i^*} = 0$.    
\STATE {\bf Output:} $(\textbf{x}(\lambda),\textbf{b}(\lambda))$ .
\end{algorithmic}
\end{algorithm}
Theorem \ref{Alg:MaxQ} shows that the dual problem can be solved in a parallel fashion, and furthermore finding $\bb(\lambda)$ can be a simple operation. For example, in  the case of a second price auction it is known that $[r_{ik} (1-\lambda_k)]_+ \in \arg\max_{b \in [0,\bar{b}_i]} \ \ h_i(r_{ik}(1-\lambda_k),b)$, and some examples for first price auctions have nice close forms as shown in the next section. Being able to solve $Q(\lambda)$ efficiently is of great importance as it is a core component to find a subgradient of $Q(\lambda)$ as the following theorem shows:
\begin{proposition}\label{Prop:SubgQ}
Given $\lambda \in \mathbb{R}^{|\calK|}$ the output of Algorithm \ref{Alg:SubgQ} is a vector $g \in \partial Q(\lambda)$.
\end{proposition}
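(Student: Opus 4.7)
\textbf{Proof plan for Proposition \ref{Prop:SubgQ}.} The natural approach is to split $Q$ into two convex pieces and apply the standard sum rule for subgradients. Define
\[
\tilde Q(\lambda) \;:=\; \sup_{(\bx,\bb) \in \calS} \bigl(\pi(\bx,\bb) - \lambda^T v(\bx,\bb)\bigr),
\]
so that $Q(\lambda) = \tilde Q(\lambda) + p(\lambda)$. Both pieces are convex in $\lambda$: $\tilde Q$ is a pointwise supremum of functions that are affine in $\lambda$ (for each fixed $(\bx,\bb)$), and $p$ is convex because it is a Fenchel conjugate of the convex function $-u(\cdot)$. Since $p$ is finite-valued on $\mathbb{R}^{|\calK|}$ (under the standing assumption $F^\ast > -\infty$, which keeps things proper), the standard sum rule gives $\partial \tilde Q(\lambda) + \partial p(\lambda) \subseteq \partial Q(\lambda)$, so it suffices to exhibit one element of each summand.

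The first step of my proof would be to handle $\tilde Q$. Let $(\bx(\lambda),\bb(\lambda))$ be the maximizer produced by Algorithm~\ref{Alg:MaxQ}, which Proposition~\ref{Prop:MaxQ} guarantees is indeed a maximizer in the definition of $\tilde Q(\lambda)$. For any $\lambda' \in \mathbb{R}^{|\calK|}$ one has the one-line inequality
\[
\tilde Q(\lambda') \;\geq\; \pi(\bx(\lambda),\bb(\lambda)) - (\lambda')^T v(\bx(\lambda),\bb(\lambda)),
\]
while equality holds at $\lambda' = \lambda$. Subtracting these two relations yields
\[
\tilde Q(\lambda') - \tilde Q(\lambda) \;\geq\; \bigl(-v(\bx(\lambda),\bb(\lambda))\bigr)^T(\lambda' - \lambda),
\]
which certifies that $-v(\bx(\lambda),\bb(\lambda)) \in \partial \tilde Q(\lambda)$. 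This is a direct instance of Danskin-type reasoning, but writing it out from first principles avoids invoking any regularity hypotheses on the parametric family.

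The second step is to produce $g_p \in \partial p(\lambda)$; this is what I expect Algorithm~\ref{Alg:SubgQ} to do on top of calling Algorithm~\ref{Alg:MaxQ}. Because $p(\lambda) = \sup_{v} \{\lambda^T v + u(v)\}$, the same supremum-of-affine-functions argument shows that any $v^\star \in \arg\max_v \{\lambda^T v + u(v)\}$ lies in $\partial p(\lambda)$. For the utility functions listed in the examples after \eqref{poi_deterministic} such a $v^\star$ can be written in closed form (e.g.\ a soft-thresholding type expression for the quadratic-penalty case, or a trivial one for the hard-budget case), so the algorithm's computation of $g_p$ is explicit.

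Finally, assembling the two pieces, the output
\[
g \;=\; -v(\bx(\lambda),\bb(\lambda)) + g_p
\]
satisfies $g \in \partial Q(\lambda)$ by the sum rule. The only delicate point, and what I would flag as the main obstacle, is justifying $\partial Q = \partial \tilde Q + \partial p$ at the particular $\lambda$ in question: this requires either a constraint-qualification-style argument (e.g.\ that the relative interiors of the effective domains of $\tilde Q$ and $p$ overlap, which follows from $\tilde Q$ being finite everywhere) or, alternatively, a direct verification that the candidate $g$ satisfies the subgradient inequality for $Q$ itself, mirroring the one-line argument used for $\tilde Q$. The latter route is probably cleanest: add the subgradient inequalities for $\tilde Q$ and $p$ at $\lambda$ to obtain $Q(\lambda') - Q(\lambda) \geq g^T(\lambda' - \lambda)$ directly, sidestepping any regularity conditions for the sum rule.
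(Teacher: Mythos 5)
Your proposal is correct and follows essentially the same route as the paper: both certify $-v(\bx(\lambda),\bb(\lambda))$ as a subgradient of the supremum term by evaluating the sup at the maximizer from Algorithm~\ref{Alg:MaxQ} and reading off the subgradient inequality, then add an element of $\partial p(\lambda)$. The only cosmetic difference is that the paper first splits the supremum into per-impression-type pieces $\psi_i$ before running the same argument, and your closing observation is right that only the trivial inclusion $\partial \tilde Q(\lambda) + \partial p(\lambda) \subseteq \partial Q(\lambda)$ (obtained by adding the two subgradient inequalities) is needed, so no constraint qualification is required.
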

\floatname{algorithm}{Algorithm}
\begin{algorithm}
\caption{Computing a subgradient of $Q (\lambda)$} \label{Alg:SubgQ}
\begin{algorithmic}
\STATE {\bf Input:} $\lambda \in \bbR^{|\calK|}$ \\
\STATE 1. Obtain $(\textbf{x}(\lambda),\textbf{b}(\lambda) ) \in \arg\max Q(\lambda)$ using Algorithm \ref{Alg:MaxQ}.
\STATE 2. Obtain $p' \in \partial p(\lambda)$
\STATE 3 Set:\vspace{-1mm}
$$g(\lambda) \leftarrow p' - v(\textbf{x}(\lambda),\textbf{b}(\lambda))$$
\vspace{-3mm}
\STATE {\bf Output:} $g(\lambda) \in \partial Q(\lambda)$ .
\end{algorithmic}
\end{algorithm}
Notice that with STEP 1. in Algorithm \ref{Alg:MaxQ} we can recover bidding prices $\bb(\lambda)$ from dual variables $\lambda \in \mathbb{R}^{|\calK|}$. The final Theorem of this section shows that for fixed bidding prices $\bb$ is easy to obtain allocation probabilities $\bx$ by solving problem \eqref{poi_deterministic}.
\begin{proposition}\label{Prop:FbConvex}
For fixed bidding prices $\bb$, problem \eqref{poi_deterministic} is a convex problem.
\end{proposition}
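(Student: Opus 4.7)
The plan is to substitute the fixed $\bb$ into both the objective and the feasible set of \eqref{poi_deterministic}, and to check that what remains is the maximization of a concave function of $\bx$ over a polyhedron.

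First, I would introduce the constants that arise from freezing $\bb$. For every $(i,k) \in \calE$, define
\[
\alpha_{ik} := [r_{ik} - \beta_i(b_{ik})]\,s_i\,\rho_i(b_{ik}), \qquad \gamma_{ik} := r_{ik}\,s_i\,\rho_i(b_{ik}),
\]
which do not depend on $\bx$. Then $\pi(\bx,\bb) = \sum_{(i,k) \in \calE} \alpha_{ik} x_{ik}$ is a linear function of $\bx$, and for each $k \in \calK$ the spending $v_k(\bx,\bb) = \sum_{i \in \calI_k} \gamma_{ik} x_{ik}$ is an affine function of $\bx$. Hence $v(\bx,\bb)$ is an affine map from $\bbR^{|\calE|}$ into $\bbR^{|\calK|}$.

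Second, I would invoke the elementary fact that the composition of a concave function with an affine map is concave: since $u(\cdot)$ was assumed concave, $\bx \mapsto u(v(\bx,\bb))$ is concave in $\bx$. Adding the linear term $\pi(\bx,\bb)$ preserves concavity, so the full objective of \eqref{poi_deterministic} is concave in $\bx$ once $\bb$ is fixed.

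Third, I would observe that the feasible set in $\bx$ is a polyhedron: with $\bb$ fixed (and already respecting $0 \le b_{ik} \le \bar b_i$), the set $\calS$ reduces to
\[
\{\bx \ge 0 : \textstyle\sum_{k \in \calK_i} x_{ik} \le 1 \text{ for all } i \in \calI\},
\]
which is convex. Maximizing a concave function over a convex set is by definition a convex optimization problem, completing the proof. There is no real obstacle here: the non-convexity of \eqref{poi_deterministic} comes entirely from the bilinear-like coupling through $x_{ik}\rho_i(b_{ik})$ and through $\beta_i(b_{ik})$, both of which disappear as soon as $\bb$ is treated as a constant.
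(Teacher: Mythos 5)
Your proof is correct and follows essentially the same route as the paper's: the profit term becomes linear in $\bx$, $v(\cdot,\bb)$ becomes affine so $u(v(\cdot,\bb))$ is concave as a composition of a concave function with an affine map, and the feasible set reduces to a polyhedron. Your version merely makes the constants $\alpha_{ik}$ and $\gamma_{ik}$ explicit, which is a harmless elaboration of the same argument.
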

Proposition \ref{Prop:FbConvex} tell us that we could use a sub-gradient method to find to find an allocation vector $\bx$ given a fixed $\bb$. Better than the previous, depending on the utility function used \eqref{poi_deterministic} can have a nice structure, for example for the utility function examples shown in the previous section, examples 1. and 3. transform problem \eqref{poi_deterministic} in a linear program and example 2. in a quadratic problem. Problems that could be solved directly using solvers like Gurobi \cite{gurobi}. We finish this section by presenting Algorithm \ref{Alg:TwoPhase} which formalize our approach to solve problem \eqref{poi_deterministic}.\vspace{-1mm}
\begin{algorithm}\caption{Two Phase primal-dual Scheme}\label{Alg:TwoPhase}
\begin{algorithmic}
\STATE {\bf Phase 1: Solve the Dual problem.}
\INDSTATE Solve $\min_{\lambda} Q(\lambda)$ to near global optimality 
\INDSTATE using a subgradient method obtaining  dual variables $\hat\lambda$.
\STATE {\bf Phase 2: Primal Recovery}
\INDSTATE 1. For all $(i,k) \in \calE$ do: 
$$\hat{b}_{ik} \gets \arg\max_{b \in [0,\bar{b}_i]} \ \ h_i(r_{ik}(1-\lambda_k),b)$$ 
\vspace{-2mm}
\INDSTATE 2. Using bid prices $\bb(\lambda)$ solve \eqref{poi_deterministic} obtaining  allocation
\INDSTATE probabilities $\hat{\bx}$.
\STATE {\bf Output:} Feasible primal solution $(\hat{\bx}, \hat{\bb})$.
\end{algorithmic}
\end{algorithm}

\section{Zero Duality Gap Results}\label{Sec:ZD}
Algorithm \ref{Alg:TwoPhase} can always be used as long as the parameters and functions of problem \eqref{poi_deterministic} are well defined. Here we we will go further and show that our dual formulation and dual scheme are the right methods to solve \eqref{poi_deterministic}. In particular, we have strong duality results which to the best of our knowledge are novel and have important applications to first and second price auctions by showing optimal bidding prices to be used in an RTB environment. These will be derived from the following theorem:
\begin{theorem} \label{Thm:Applied}
If for all $i \in \calI$ we have that $\rho_i(\cdot)$ and $\beta_i(\cdot)$ are differentiable and:
\begin{itemize}
\item $\rho_i'(b)>0$ for all $b \in [0,\bar{b}_i]$.
\item $g_i(b) = \frac{(\rho_i(b) \beta_i(b))'}{\rho_i'(b)}$ is strictly increasing for all $b \in [0,\bar{b}_i]$.
\end{itemize}
for all $i \in \calI$, then for any $\lambda^* \in \arg\underset{\lambda \in \mathbb{R}^{|\calK|}}{\max}$ $Q(\lambda)$, we have $F(\bb^*,\bx^*)=Q(\lambda^*)$ for any $(\bx^*,\bb^*) \in \arg\underset{(\bb,\bx) \in S}{\max} Q(\lambda^*)$. 
\end{theorem}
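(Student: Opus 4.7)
The plan is to prove the equality by exhibiting a hidden convex reformulation of \eqref{poi_deterministic} that the two monotonicity hypotheses make available, and then invoking Fenchel--Young duality to collapse the gap $Q(\lambda^*) - F(\bx^*,\bb^*)$ to zero.

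First, I would introduce the substitution $z_{ik} := \rho_i(b_{ik})$, which is bijective on $[0,\bar{b}_i]$ since $\rho_i' > 0$, together with $w_{ik} := x_{ik} z_{ik}$. Setting $f_i(z) := (\beta_i \rho_i)(\rho_i^{-1}(z))$, the chain rule gives $f_i'(z) = g_i(\rho_i^{-1}(z))$, which is strictly increasing because $g_i$ is and $\rho_i^{-1}$ is; hence $f_i$ is convex in $z$. The ``cost'' piece of the profit, $s_i x_{ik}(\beta_i \rho_i)(b_{ik})$, becomes the perspective $s_i x_{ik} f_i(w_{ik}/x_{ik})$, which is jointly convex in $(x_{ik},w_{ik})$. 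The spending $v_k$ is linear in $\mathbf{w}$, so the full objective $\pi + u(v)$ is concave in $(\bx,\mathbf{w})$. With the bid-range constraint $0 \le b_{ik} \le \bar{b}_i$ replaced by the convex constraint $0 \le w_{ik} \le x_{ik}\rho_i(\bar{b}_i)$, problem \eqref{poi_deterministic} is equivalent to a bona fide convex program in $(\bx,\mathbf{w})$.

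Second, I would verify that the Lagrangian dual of this convex reformulation (obtained by substituting $u(v) = \inf_\lambda\{p(\lambda) - \lambda^T v\}$) coincides in value with $Q(\lambda)$, because the inner supremum separates across $(i,k)$ exactly as in Algorithm \ref{Alg:MaxQ} and is invariant under the substitution. Standard convex duality then yields $F^* = Q^*$ together with a primal--dual optimal pair whose KKT stationarity enforces $v(\bx^*,\bb^*) \in \partial p(\lambda^*)$ at any dual optimum $\lambda^*$ (Slater's condition is immediate since $(\bx,\mathbf{w})=0$ is strictly feasible in the reformulated convex problem, and $F^* > -\infty$ is assumed). Writing
\begin{align*}
Q(\lambda^*) - F(\bx^*,\bb^*) = p(\lambda^*) - (\lambda^*)^T v(\bx^*,\bb^*) - u(v(\bx^*,\bb^*)),
\end{align*}
which is nonnegative by the Fenchel inequality and vanishes exactly when $v(\bx^*,\bb^*) \in \partial p(\lambda^*)$, the desired equality $F(\bx^*,\bb^*) = Q(\lambda^*)$ follows.

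The step I expect to need the most care is extending the equality from a single saddle pair to \emph{any} $(\bx^*,\bb^*) \in \arg\max_{(\bx,\bb) \in \calS} Q(\lambda^*)$, as the theorem asserts. Strict monotonicity of $g_i$ makes the bid $\bb(\lambda^*)$ the unique maximizer in step 1 of Algorithm \ref{Alg:MaxQ}, so every arg max shares the same bid vector and the residual flexibility lies only in the per-impression allocation ties in step 2. I would handle this by combining the Danskin-type characterization $\partial M(\lambda^*) = -\conv\{v(\bx,\bb(\lambda^*)) : (\bx,\bb(\lambda^*)) \in \arg\max\}$ with the optimality relation $0 \in \partial p(\lambda^*) + \partial M(\lambda^*)$, exploiting the linearity of $v$ over the convex arg max set and the convexity of $\partial p(\lambda^*)$ to argue that every realizable $v(\bx^*,\bb^*)$ must itself lie in $\partial p(\lambda^*)$, which then propagates the Fenchel--Young equality to every candidate $(\bx^*,\bb^*)$.
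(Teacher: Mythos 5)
Your core idea is genuinely different from the paper's and is, in its essentials, sound: the substitution $z_{ik}=\rho_i(b_{ik})$, $w_{ik}=x_{ik}z_{ik}$ turns the cost term into the perspective of $f_i=(\beta_i\rho_i)\circ\rho_i^{-1}$, and the hypothesis that $g_i$ is increasing is exactly convexity of $f_i$ (expected cost as a function of win probability). This makes the inner maximization defining $Q(\lambda)$ a concave program whose solution set is convex, and since $v$ is linear in $\mathbf{w}$ the image $\mathcal{V}_i$ of each per-impression arg max is convex. That is precisely the property the paper extracts by a different route (the sign analysis of $h'(r,b)=\rho'(b)(r-g(b))$, the ``Unique Solution'' condition of Definition \ref{Def:US}, and the case analysis in Lemma \ref{Thm:UniqSoltn}); your reformulation arguably explains \emph{why} the condition on $g_i$ is the right one and would give convexity of $\mathcal{V}_i$ even without strictness. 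Two small repairs: the image of the bid range is $x_{ik}\rho_i(0)\le w_{ik}\le x_{ik}\rho_i(\bar b_i)$, not $0\le w_{ik}$, when $\rho_i(0)>0$; and maximizers with $x_{ik}=0$ leave $b_{ik}$ free, which is harmless only because it does not affect $v$.

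There are, however, two genuine gaps. First, the strong-duality step via Slater does not work as stated: $(\bx,\mathbf{w})=0$ sits on the boundary of $\bx\ge 0$ and of $w_{ik}\le x_{ik}\rho_i(\bar b_i)$, and the qualification actually needed for this Fenchel-type dual concerns $\dom u$, which the paper leaves arbitrary --- for Example 3 with $\alpha_k>0$, or a utility forcing $v_k=m_k$ exactly, $v=0$ is not even in $\dom u$, so ``Slater is immediate'' fails. This detour is also unnecessary: the theorem hands you a dual minimizer $\lambda^*$, so you may start from $0\in\partial Q(\lambda^*)$ and the Danskin characterization (Lemma \ref{Lem:Danskins}), exactly as Lemmas \ref{Lem:FirstND}--\ref{Cor:SufSD} do, with no constraint qualification. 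Second, and more seriously, your closing argument for the ``for any $(\bx^*,\bb^*)$'' clause does not go through: $0\in\partial p(\lambda^*)-\mathcal{V}$ yields only $\mathcal{V}\cap\partial p(\lambda^*)\neq\emptyset$, not $\mathcal{V}\subseteq\partial p(\lambda^*)$. When Step 2 of Algorithm \ref{Alg:MaxQ} has allocation ties at $\lambda^*$, $\mathcal{V}$ is a nondegenerate polytope, and if $p$ is differentiable at $\lambda^*$ only the tie-breaking whose spend vector equals $\nabla p(\lambda^*)$ attains $F=Q(\lambda^*)$; the Fenchel--Young equality fails for the others. So your argument (like the paper's own Lemma \ref{Thm:UniqSoltn}, which is only an existence statement) proves that \emph{some} maximizer of the inner problem is primal optimal, not that every one is.
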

Notice that Theorem \ref{Thm:Applied} ensures that no duality gap exists, but furthermore for an optimal dual variable $\lambda^*$ it gives a form of the variables $(\bx^*,\bb^*)$ such that $F(\bx^*,\bb^*)=Q(\lambda^*)$. Also, notice that the second condition of the theorem is a form of diminishing returns. 
\begin{description}
    \item[Applications of Theorem \ref{Thm:Applied}]
\end{description}
\begin{enumerate}
  \item If for all $i \in \calI$ their auctions are second price and $\rho_i(\cdot)$ is a strictly increasing function in $[0,\bar{b}_i]$, then Theorem \ref{Thm:Applied} holds.  Also, for an optimal dual variable $\lambda^*$ optimal bidding prices are $b(\lambda)_{ik}=[\min\{\bar{b}_i,r_{ik}(1-\lambda_k^*)\}]_+$ for all $(i,k) \in \calE$.
  \item If for all $i \in \calI$ all auctions are first price auctions or more generally scaled first price auctions in which the winning DSP pays an $\alpha\in (0,1]$ percentage of the bid it offered, Theorem \ref{Thm:Applied} holds when $\rho_i(\cdot)$ is a strictly increasing twice differentiable concave function. Example of the later are: 1) $\rho(b)=\tfrac{b}{c+b}$ for some fixed $c>0$, 2) square roots and logarithms, 3) $\rho(\cdot)$ representing the cumulative distribution function of an exponential or logarithm-exponential random variable.
  \item If for all $i \in \calI$ all auctions are first scaled price auctions with $\alpha \in (0,1]$ in which for each impression $i$ there is a fixed number $n_i \ge 1 $ of other DSPs who bid independently as $Uniform(0,\bar{b}_i)$, then Theorem \ref{Thm:Applied} holds. Furthermore, for an optimal dual variable $\lambda^*$ optimal bidding prices are $b(\lambda)_{ik}= \left\lbrack \min \left\lbrace \bar{b}_i,\tfrac{n_i r_{ik}(1-\lambda_k^*)}{\alpha (n_i+1)}  \right\rbrace \right\rbrack_+$ for all $(i,k) \in \calE$.
  \item Any combination of the above, or cases in which each impression type satisfies the conditions of Theorem \ref{Thm:Applied}.
\end{enumerate}

We finish this section by making three important comments. First, to obtain the form of the optimal bidding prices is only needed to solve $\arg \max_{b \in [0,\bar{b}_i]}$ $h_i(r_{ik}(1-\lambda_k^*),b)$. In many cases, like second price auction, this will have a close form, but for many others the DSP can have tables with approximate solutions that can be used instead of solving the problem in real time. Second,  many ad-exchanges use what are called hard reserve prices that consider a bid valid only if it is higher than the reserve price. This poses a problem for Theorem \ref{Thm:Applied} as the condition of $\rho(\cdot)$ being strictly non-decreasing would not be true. If the impression types had fixed hard reserve prices this is not a major issue as we can change the model to bid between the reserve price and $\bar{b}_i$ (if the reserve price were higher than $\bar{b}_i$ wouldn't bid for that impression type). In the case that hard reserve prices change dynamically, heuristics could be used, \textit{e.g.} considering bidding in real time only only for those campaigns with bid values higher than the reserve price, putting levels of reserve price as a field in the impression types, and others which we don't explore here. Third, it can be proven that Theorem \ref{Thm:Applied} guarantees that Algorithm \ref{Alg:TwoPhase} would converge to an optimal solution for \eqref{poi_deterministic} as we get better $\lambda$ solutions of the dual problem. For space reasons we don't extend on this topic here.

\section{Computational Experiments}\label{sec:expts}
Here we present computational results using the Ipinyou DSP data \cite{zhang2014real} to which we applied our two-phase solution procedure comparing its performance w.r.t. a greedy heuristic. The way we suggest and apply in our experiments an allocation and bidding variables $(\bx,\bb)$ in a practical RTB environment is shown in Policy \ref{Alg:OnlinePolicy} and the heuristic to which we compare our method is shown in Policy \ref{greedy-policy}.
\floatname{algorithm}{Policy}
\vspace{-2mm}
\begin{algorithm}
\caption{Online Policy Implied by $(\textbf{x,b})$
}\label{Alg:OnlinePolicy}
\begin{algorithmic}
\STATE {\bf Input:} Allocation and bidding variables $(\hat{\bx}, \hat{\bb})$ and a new impression arrival of type $i \in \calI$.
\STATE 1. Sample a campaign $\tilde{k} \in \calK_i$ according to the distribution implied by the values $x_{ik}$ for $k \in \calK_i$ or break with probability $1 - \sum_{k \in \calK_i} x_{ik}$ (\textit{i.e.} choosing no campaign). If some campaigns have depleted its budget adjust the probabilities to take this fact into account. 
\STATE 2. Enter bid price $\hat{b}_{i\tilde{k}}$. If the auction is won, then pay the ad exchange the amount is asking. If the auction is not won, then break.
\STATE 3. Show an ad for campaign $\tilde k$. If a click or action of interest happens, then deduct $q_{\tilde k}$ from the budget of campaign $\tilde k$ and earn revenue $q_{\tilde k}$.
\end{algorithmic}
\end{algorithm}
\vspace{-6mm}
\begin{algorithm}
\caption{Greedy Policy }\label{greedy-policy}
\begin{algorithmic}
\STATE {\bf Input:} New impression arrival $i \in \calI$.
\STATE 1. Bid for a campaign $k \in \calK_i$ with remaining budget bigger than $q_k$ with the highest $r_{ik}$ value.
\end{algorithmic}
\end{algorithm}
The Greedy Heuristic is optimal for the case of infinite budgets and second price auctions (is easy to extend it to arbitrary auction types). As an important remark our method should be used in a real DSP operation inside a Model Predictive Control Scheme, which calls Algorithm \ref{Alg:TwoPhase} as budgets get used and different model inputs gets updated as time progresses.

The public available Ipinyou data \cite{zhang2014real} contains information about real bidding made by the chinese DSP Ipinyou in 2013. It contains different features including the bidding prices of the impressions for which Ipinyou bid for, and the price paid by Ipinyou to the ad-exchange in case an impression was won and if a click or conversion occurred (we did not use conversion data). Ipinyou assumes that ad-exchanges use second price auctions. The data is already divided in train and test sets and it has been used to test bidding strategies for DSPs \cite{zhang2014optimal,ren2018bidding} but we haven't found a paper that use it for both bidding  and allocation strategies, reason why we compare to the Greedy Heuristic \ref{greedy-policy}. Ipinyou data is divided in three different time periods in 2013, of those we decided to use the third as in the first there is no information about the campaigns Ipinyou bid for, and in the second Ipinyou assumed that all impression types could serve all campaigns which make the impression-campaign graph non-interesting. The third season contains  3.15M of and 1.5M logs of impressions won by Ipinyou in the train and test set resp. in behlaf of four advertisers, which have 2716 and 1155 clicks associated to them. Here we use the different advertisers as our campaigns. 

To create ``impression types'' we divided the impressions by the visibility feature which has a strong correlation with CTR, and then by the regions, homepage url, and ``width x height'' of the ad to be shown (features that appear in all impression logs). The last three features have a high dimensionality, for example homepage url have 54,108 unique urls. For that reason we created mutually exclusive sets of the form all urls that were targeted only by advertiser 1, all that were targeted only by advertisers 1 and 3, etc. With this technique we partition all impressions for which Ipinyou bid for in 160 clusters of impressions which we used to create our final partition of 23 impression types. Of those 19 corresponds to the clusters with a minimum of 30,000 impressions won in the train set and the 4 left are the union of all clusters having different visibility attribute (we grouped together the second, third, fourth and fifth view as if they were the same visibility type). Our final graph is composed of 4 campaigns, 23 impression types, it has 43 edges, and the different CTR were taken as the empirical rates for each combination of (impression type, advertiser). Using only the impressions won in the train set for each impression type $i$ we fitted a beta distribution using the python Scipy package (imposing the location parameter to be equal to zero) to obtain parameters to estimate the bid landscape functions (the $\rho_i(\cdot)$ function is just a function call, but the $\beta_i(\cdot)$ function was estimated using Monte-Carlo). Finally, we count the times each impression type appears in the test set to create the $s_i$ values, and the budgets correspond to the total amount of money that Ipinyou paid for the impressions assigned to each advertiser in the test set. To simulate a real time environment we used the empirical train CTR to train our models and the greedy heuristic (we took the average CTR per impression for the pairs of (impression type,advertiser) that appear less than 5000). To test our model we use the impressions won by Ipinyou in the actual order saving their impression type. Then, one by one we read each impression log and we assume that the impression was won if the proposed bid for it is higher than the amount Ipinyou paid for it. A click for the proposed advertiser occurs with probability equal to the empirical CTR from the test data for the pair (impression type, advertiser). We tried the utility functions 1. and 2. from Section \ref{sec:optalg}, using $\tau_k=1/m_k$ for all $k \in \calK$ for second one.
\vspace{-2mm}
\paragraph{Results} Our results are shown in Figure \ref{thefig}. Here we define budget utilization (b.u.) as the percentage of the total budget that was used at the end of one simulation, and u.f. stands for utility function. We performed two experiments. The first was to see the sensitive of our model w.r.t. to the budget. We tried $1/32$, $1/8$, $1/4$, $1/2$ and $1/0$ of the budgets Ipinyou used for each advertiser in the test data and we run 100 simulations for each setting obtaining average profit and b.u. Our results are all relative to the average profit and b.u. obtained by using the greedy heuristic. What makes one simulation different from the other is that the CTR is a random variable. In the second experiment we multiply the penalization parameters $tau_k=1/m_k$ that appear in the u.f. 2  by 0.1, then by 0.3, and so on until 2.1 running 100 simulations in each case obtaining the average profit and b.u. All our results are relative to the average profit and b.u. of the u.f. 1. and we also include the average results from the greedy heuristic for comparison.  Our results show that our methodology works very well for cases in which the budget is tight, but when is not the case the greedy heuristic is a good alternative. From our second experiment we can see that a better b.u. utilization can be obtained at the cost of having a worst profit (it can even be negative).
\begin{figure}[h!]
\vspace{-6mm}
\begin{multicols}{2}
\scalebox{0.66}{
\begin{tabular}{c}
\begin{tabular}{cccccc}
                    \multicolumn{6}{c}{{\bf Relative Profit}}  \\ \hline
{\bf (Perc./Met.)}	   & {\bf 1/32} & {\bf 1/8} & {\bf 1/4} & {\bf 1/2} & {\bf 1.0} \\ \hline
{\bf U.F. 1}  & {\Large 2.10}     & {\Large 1.38}  & {\Large 1.08}     & {\Large 0.92}   & {\Large 1.21}    \\
{\bf U.F. 2}    & {\Large 1.83}     & {\Large 1.34}  & {\Large 0.94}     & {\Large 0.76}   & {\Large 0.8}     \\\hline
\end{tabular}
\\ \\ 
\begin{tabular}{cccccc}
                    \multicolumn{6}{c}{{\bf Relative Budget Utilization}}  \\ \hline
{\bf (Perc./Met.)}	   & {\bf 1/32} & {\bf 1/8} & {\bf 1/4} & {\bf 1/2} & {\bf 1.0} \\ \hline
{\bf U.F. 1}  & {\Large 0.95}     & {\Large 0.88}  & {\Large 0.8}     & {\Large 0.95}   & {\Large 1.07}    \\
{\bf U.F. 2}    & {\Large 0.84}     & {\Large 0.94}  & {\Large 1.07}     & {\Large 1.10}   & {\Large 1.38}     \\\hline
\end{tabular}
\end{tabular}
}
\vfill\null
\columnbreak
\includegraphics[scale = 0.24]{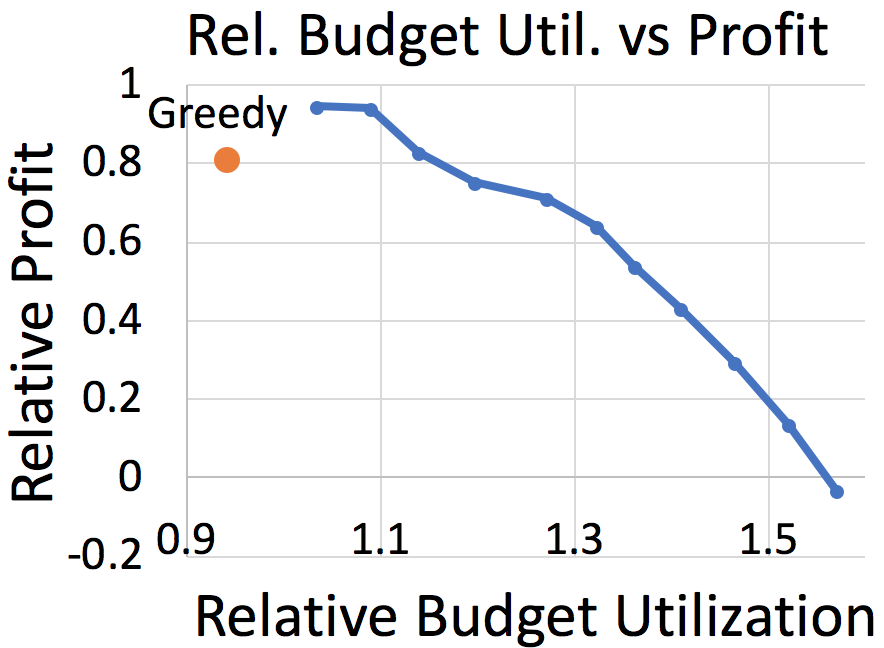}
\end{multicols}
\vspace{-8mm}
\caption{Two-phase policy vs. greedy policy}
\label{thefig}
\vspace{-2mm}
\end{figure}

Let us conclude this section by mentioning a few directions for future research. It would be very valuable to perform experiments in which impressions are auctioned in both first and second price auctions and in which the cardinality of the impression types and campaigns are higher. Also, several of the quantities we assume as known in this work are hard to estimate in practice. We will study robust approaches to our model.

\section{Proofs of the Propositions and Theorem 5.1}\label{Sec:Proofs}

In this appendix we will make use of the following definitions which for space we skip in the main text (we use them in the proofs of Proposition \ref{Prop:SubgQ} and Theorem \ref{Thm:Applied}):
\begin{definition}\label{Def:Appendix}
\begin{itemize}
\item The set of feasible bidding and allocation variables for a given impression type is: 
$$S_i := \left\lbrace (x_i,b_i) \in \mathbb{R}^{|\calK_i|} \times \mathbb{R}^{|\calK_i|}: \ \ b_i \in [0,\bar{b}_i]^{|\calK_i|}, \ \ \sum_{k \in \calK_i} x_{ik} \le 1, x_{i} \ge 0  \right\rbrace$$
\item The expected amount campaigns spend in impressions of type $i$ is $v_i(\bx,\bb) \in \mathbb{R}^{|\calK|}$ which is a vector whose $k \in \calK$ coordinate takes the value of $r_{ik}s_ix_{ik} \rho_i(b_{ik})$ if $(i,k) \in \calK$ and $0$ o.w. Then, $\sum_{i \in \calI} v_i(\bx,\bb)= v(\bx,\bb)$.
\item The contribution of the different impression types in the dual function is separable, and therefore given a fixed dual variable $\lambda \in \mathbb{R}^{|\calK|}$ we define for each $i \in \calI$:
$$\psi_i(\lambda) = \underset{(x_i,b_i) \in S_i}{\max} \ \ \sum_{k \in \calK_i} h_i(r_{ik}(1-\lambda_k),b_{ik})s_i x_{ik} $$
\item Let's define the profit from impressions of type $i$ in the objective from impression of type $i$ in \eqref{old-poi} as $\pi_i(\bx,\bb) = \sum_{k \in \calK_i} h_i(r_{ik},b_{ik}) s_i x_{ik}$, and therefore $\sum_{i \in \calI} \pi_i(\bx,\bb)=\pi(\bx,\bb)$.
\end{itemize}
\end{definition}

Notice that for any $i \in \calI$ both $v_i(\bx,\bb)$ and $\pi_i(\bx,\bb)$ depend only in $(x_i,b_i) \in S_i$, but we decide to use this notation to not carry the ``$\sum_{i \in \calI}$'' everywhere. 

\subsection{Proof of Proposition \ref{Prop:MaxQ}}
\begin{proof}
For fixed $\lambda$ we have that $p(\lambda)$ is a constant, then we need to focus only on the maximization part of $Q(\lambda)$. For any fixed $\bf{x}' \ge 0$ we have:
\begin{align*}
& \underset{(\textbf{x}',\textbf{b}) \in S}{\max} \ \ \sum_{k \in \calK} \sum_{i \in \calI} h_i(r_{ik}(1-\lambda_k),b_{ik}) s_ix_{ik}' = \\
& \sum_{k \in \calK} \sum_{i \in \calI} \underset{b_{ik} \in [0,\bar{b}_i]}{\max} \ \   h_i(r_{ik}(1-\lambda_k),b_{ik}) s_ix_{ik}'
\end{align*}
Take $(i,k) \in \calE$ arbitrary and assume $x_{ik}' \ge 0$, then  any $b(\lambda)_{ik} \in \arg\max_{b \in [0,\bar{b}_i]}  h_i(r_{ik}(1-\lambda_k),b_{ik}) s_i$ is an optimal  bidding price for the pair $(i,k)$ independent of the value of $x_{ik}'$ which proves STEP 1. of Algorithm \ref{Alg:MaxQ}. 
Fixing $\bb = \bb(\lambda)$, we can use that for each $i \in \calI$ we have $\sum_{k \in \calI_k} x_{ik} \le 1$ and $x_{ik}\ge0$ for all $(i,k) \in \calE$ are the only constraints for $\bx$ (w.r.t. to impression type $i$), therefore is optimal for each impression type $i \in \calI$ to bid only for a campaign that maximizes the profit we can get from it. If for an impression type $i$ there is no campaign $k \in \calK_i$ which gives us a positive profit, then is optimal to not bid for that impression type. That's exactly what STEP 2. of Algorithm \ref{Alg:MaxQ} concluding the proof.
\end{proof}

\subsection{Proof of Proposition \ref{Prop:SubgQ}}
\begin{proof}
Here we assume $\lambda \in \mathbb{R}^{|\calK|}$ fix. Notice that using the definitions from Definition \ref{Def:Appendix} we have  $Q(\lambda) = \sum_{i \in \calI}  \psi_i(\lambda) + p(\lambda)$. We are going that to show that for any $(\bx(\lambda),\bb(\lambda)) \in \arg \max Q(\lambda)$ and any $\lambda' \in \mathbb{R}^{|\calK|}$ we have $\psi_i(\lambda') \ge \psi_i(\lambda) - v_i(x_i(\lambda),b_i(\lambda))^T(\lambda' -\lambda)$. The previous is enough to finish this proof as $\partial Q(\lambda) =  \partial \sum_{i \in \calI} \psi_i(\lambda) +\partial p(\lambda) = \sum_{i \in \calI} \partial  \psi_i(\lambda) +\partial p(\lambda)$ and $\sum_{i \in \calI} v_i(x_i(\lambda),b_i(\lambda)) = v(\bx(\lambda),\bb(\lambda))$. Let $\lambda' \in \mathbb{R}^{|\calK|}$ be any dual variable different from $\lambda$ (if not the following set of equations are trivial), and let $i \in \calI$ arbitrary, then:
\begin{align*}
\psi_i(\lambda') = & \underset{(x_i,b_i) \in S_i}{\max} \sum_{k \in \calK_i}  \left( r_{ik}(1-\lambda_k')  -\beta_i(b_{ik}) \right)\rho_i(b_{ik}) s_i x_{ik} \\
= & \underset{(x_i,b_i) \in S_i}{\max} \sum_{k \in \calK_i} \left( r_{ik}(1-\lambda_k) -r_{ik}(\lambda_k' -\lambda_k )  -\beta_i(b_{ik}) \right)\rho_i(b_{ik}) s_i x_{ik} \\
\ge & \sum_{k \in \calK_i} h_i(r_{ik}(1-\lambda_k),b_{ik}(\lambda) )  s_i x_{ik}(\lambda) \\
& - \sum_{k \in \calK_i} (\lambda_k' -\lambda_k )r_{ik}\rho_i(b_{ik}(\lambda) )s_i x_{ik}(\lambda)  \\
\ge &  \psi_i(\lambda) - v_i(x_i(\lambda),b_i(\lambda))^T(\lambda' -\lambda)
\end{align*}
\end{proof}

\subsection{Proof of Proposition \ref{Prop:FbConvex}}
\begin{proof}
Assume $\textbf{b}$ fixed. The domain of $F(\textbf{b})$ is a set of linear constraints, then we only need to proof that the objective function is concave. Clearly the summation part of the objective function is linear and therefore concave, then is only left to show that the utility function part is concave. The latter is true as  $v(\cdot,\bb)$ is an affine function in term of the allocation probabilities, therefore $u(v(\cdot,\bb))$ is the composition of a concave and affine function and therefore concave.
\end{proof}

\subsection{Proof of Theorem \ref{Thm:Applied}}

This proof uses the terminology defined in Definition \ref{Def:Appendix} and we can assume $\rho_i(\cdot)$ and $\beta_i(\cdot)$ to be continuous functions for all parts in this proof as it is a weaker condition than being differentiable which is part of the hypothesis of Theorem \ref{Thm:Applied}.  We will start by showing that if we assume $\rho_i(\cdot)$ and $\beta_i(\cdot)$ to be continuous for all $i \in \calI$, then we can obtain the explicit form of the sub-differential $\partial Q(\lambda)$ for any fixed $\lambda \in \mathbb{R}^{|\calK|}$ (Lemma \ref{Lem:Danskins}). Result that we combine with Lemma \ref{Lem:FirstND}  to prove Lemma \ref{Cor:SufSD}. The later results will help us prove that if there exists an optimal dual variable $\lambda^*$ that satisfies a condition we call ``Unique Solution'' (Definition \ref{Def:US}), then there is no duality gap (Lemma \ref{Thm:UniqSoltn}). Finally, we will prove Theorem \ref{Thm:Applied} by showing that when the hypothesis of the theorem holds, then the ``Unique Solution'' condition hold for any feasible lambda $\lambda$ (\textit{i.e.}, $p(\lambda)< \infty$), and therefore for any optimal dual variable.

Let's first define $S_i^*$ as the set of optimal solutions for $\psi_i(\cdot)$ given $\lambda \in \mathbb{R}^\calK$ for some $i \in \calI$, \textit{i.e.}
$S_i^*(\lambda) : = \arg \underset{(x_i,b_i) \in S_i}{\max} \psi_i(\lambda) $, and $S^*= S_1^*(\lambda) \times \dots S_{|\calI|}^*(\lambda)$.

\begin{lemma} \label{Lem:Danskins} \textbf{(Equivalent to Lemma 3.3 in \cite{wang2017vanishing})} 
The sub-differential of the dual function is 
\begin{align}
\partial Q(\lambda) = \textbf{conv}\left \lbrace - \sum_{i \in \calI} v_i(\bx,\bb) +y  \Big| \text{ } (x_i,b_i) \in S_i^*(\lambda), y \in \partial p(\lambda)    \right \rbrace \nonumber
\end{align}
\end{lemma}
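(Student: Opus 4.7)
The plan is to recognize this lemma as an instance of Danskin's theorem combined with the standard subdifferential sum rule. Since $Q(\lambda) = \sum_{i \in \calI} \psi_i(\lambda) + p(\lambda)$, and each $\psi_i$ is a maximum of functions that are affine in $\lambda$, I would compute $\partial \psi_i(\lambda)$ via Danskin, compute $\partial Q(\lambda)$ by summing subdifferentials, and finally rewrite the Minkowski sum of convex hulls as a single convex hull.

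First I would fix $i \in \calI$ and observe that for each fixed $(x_i, b_i) \in S_i$, the integrand $\sum_{k \in \calK_i}\bigl(r_{ik}(1-\lambda_k) - \beta_i(b_{ik})\bigr)\rho_i(b_{ik}) s_i x_{ik}$ is affine in $\lambda$ with gradient $-v_i(x_i,b_i) \in \mathbb{R}^{|\calK|}$ (by the definition of $v_i$ from Definition \ref{Def:Appendix}). Thus $\psi_i$ is convex as a pointwise supremum of affine functions. Second, I would note that the feasible set $S_i$ is compact (it is the product of a probability simplex and the box $[0,\bar{b}_i]^{|\calK_i|}$) and that the objective of the inner max is jointly continuous in $(x_i,b_i)$ and $\lambda$, because $\rho_i$ and $\beta_i$ are assumed continuous. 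These are exactly the hypotheses of Danskin's theorem for convex maximization, which yields
\[
\partial \psi_i(\lambda) \;=\; \mathrm{conv}\bigl\{ -v_i(x_i,b_i) \,:\, (x_i,b_i) \in S_i^*(\lambda) \bigr\}.
\]

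Third, since each $\psi_i$ is finite and continuous on all of $\mathbb{R}^{|\calK|}$ (as $S_i$ is compact and the integrand continuous), the Moreau--Rockafellar sum rule for convex subdifferentials applies without any further constraint qualification at any $\lambda$ with $\partial p(\lambda) \neq \emptyset$, giving $\partial Q(\lambda) = \sum_{i \in \calI} \partial \psi_i(\lambda) + \partial p(\lambda)$. The last step is to collapse this Minkowski sum into a single convex hull by invoking the standard identity $\mathrm{conv}(A) + \mathrm{conv}(B) = \mathrm{conv}(A+B)$ iterated over $i \in \calI$, together with the fact that $\sum_i v_i(\bx,\bb) = v(\bx,\bb)$ and that adding a convex set (here $\partial p(\lambda)$, viewed as $\mathrm{conv}(\partial p(\lambda))$) preserves the convex hull structure. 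This yields exactly the claimed formula.

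The main obstacle is not the manipulation itself but the careful verification of Danskin's hypotheses --- specifically, making sure that continuity of $\rho_i$ and $\beta_i$ (implied by the differentiability assumption of Theorem \ref{Thm:Applied}, but here assumed directly at the top of the appendix) suffices to guarantee that $S_i^*(\lambda)$ is a nonempty compact set and that the sup is attained. A minor secondary obstacle is justifying the convex-hull collapse: strictly speaking, $\partial p(\lambda)$ is already convex, so one writes it trivially as $\mathrm{conv}(\partial p(\lambda))$ before applying the Minkowski identity, and the Cartesian-product description of tuples $(x_i,b_i)_{i \in \calI} \in S^* = \prod_i S_i^*(\lambda)$ from the definition right before the lemma lines up precisely with the index set of the convex hull in the stated formula. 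Once these bookkeeping points are checked, the lemma follows without further computation.
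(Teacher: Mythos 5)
Your proposal is correct and follows essentially the same route as the paper: decompose $Q(\lambda) = \sum_{i \in \calI}\psi_i(\lambda) + p(\lambda)$, apply Danskin's theorem to each $\psi_i$ (using that the inner objective is affine in $\lambda$ with gradient $-v_i(x_i,b_i)$ and that $S_i$ is compact with a continuous objective), and then combine via the subdifferential sum rule and the identity for convex hulls of Minkowski sums. Your write-up is in fact somewhat more careful than the paper's, which asserts the Danskin hypotheses and the sum-rule step without the compactness and constraint-qualification remarks you supply.
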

\begin{proof}
Let's define $\phi_i(\lambda, (x_i,b_i)): = $ $\pi_i(x_i,b_i)-\lambda^T v_i(\bx,\bb)$ for each $i \in \calI$. Then, $\psi_i(\lambda) = \underset{(x_i,b_i) \in S_i}{\max}$ $\phi_i(\lambda,(x_i,b_i))$, and $\phi(\cdot,\cdot)$ is differentiable w.r.t. to its first argument,  $\phi(\cdot,\cdot)$ is a continuous function w.r.t. both of its arguments,  and $\partial \phi / \partial \lambda$ is a continuous function w.r.t. to its second argument  for all $\lambda$. Then, Danskin's Theorem says $\partial \psi_i(\lambda) := \textbf{conv}\{ -v_i(\bx,\bb) |\text{ } (x_i,b_i) \in S_i^*(\lambda)  \}$. Finally, using that $Q(\lambda) =  \sum_{i \in \calI} \psi_i(\lambda) + p(\lambda)$ we obtain the desired conclusion.
\end{proof}

\begin{lemma} \label{Lem:FirstND}
If there exists $\lambda^*$ optimal dual variable, such that $v(\bx^*,\bb^*) \in \partial p(\lambda^*)$, for some $(\bx^*,\bb^*) \in S^*(\lambda^*)$, then $Q(\lambda^*) = F(\textbf{x}^*,\textbf{b}^*)$. 
\end{lemma}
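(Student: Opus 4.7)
The plan is to observe that the hypothesis $v(\bx^*,\bb^*) \in \partial p(\lambda^*)$ is nothing other than the Fenchel--Young equality condition for the convex conjugate pair $(-u,p)$, and that combined with $(\bx^*,\bb^*) \in S^*(\lambda^*)$ this directly collapses the chain of inequalities between $F(\bx^*,\bb^*)$ and $Q(\lambda^*)$ into equalities.

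First I would unpack the assumption on subgradients. Since $p$ is the convex conjugate of $-u$ (so that $p(\lambda) = \sup_v\{\lambda^T v + u(v)\}$ and by Fenchel--Moreau $u(v) = \inf_\lambda\{p(\lambda) - \lambda^T v\}$), a standard consequence is that $v \in \partial p(\lambda)$ if and only if the Fenchel--Young inequality $p(\lambda) - u(v) \geq \lambda^T v$ holds with equality. Applying this to $v = v(\bx^*,\bb^*)$ and $\lambda = \lambda^*$ gives the key identity
\begin{equation*}
u\bigl(v(\bx^*,\bb^*)\bigr) \;=\; p(\lambda^*) - (\lambda^*)^T v(\bx^*,\bb^*).
\end{equation*}

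Next I would plug this into the definition of the primal objective. Using the Fenchel--Moreau representation of $u$ already invoked in Section~\ref{sec:LAD},
\begin{equation*}
F(\bx^*,\bb^*) \;=\; \pi(\bx^*,\bb^*) + u\bigl(v(\bx^*,\bb^*)\bigr) \;=\; \pi(\bx^*,\bb^*) - (\lambda^*)^T v(\bx^*,\bb^*) + p(\lambda^*).
\end{equation*}
Now I would invoke the assumption $(\bx^*,\bb^*) \in S^*(\lambda^*)$: by definition of $S^*(\lambda^*)$ (and Proposition~\ref{Prop:MaxQ}), this pair attains the supremum in $\sup_{(\bx,\bb)\in S}\{\pi(\bx,\bb) - (\lambda^*)^T v(\bx,\bb)\} = Q(\lambda^*) - p(\lambda^*)$. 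Substituting this identification finishes the argument and yields $F(\bx^*,\bb^*) = Q(\lambda^*)$.

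The proof is essentially a two-line chain of Fenchel identities, so there is no real obstacle beyond carefully matching the convexity conventions. The only thing I would be mindful about is the use of Fenchel--Young equality: we are relying on $-u$ being a proper closed convex function so that $(-u)^{**}=-u$ and the biconjugate formula legitimately gives $u(v) = \inf_\lambda\{p(\lambda)-\lambda^T v\}$ with attainment at any $\lambda^*$ with $v \in \partial p(\lambda^*)$. These regularity conditions are implicit in the paper's setup (concavity of $u$ plus the standing assumption $F^* > -\infty$), so no extra hypothesis is needed. Note that the optimality of $\lambda^*$ is not actually used to obtain $Q(\lambda^*)=F(\bx^*,\bb^*)$; its role is to upgrade this equality, via the weak-duality chain $Q(\lambda^*) \geq Q^* \geq F^* \geq F(\bx^*,\bb^*)$, to the strong-duality statement $F^* = Q^*$ and primal optimality of $(\bx^*,\bb^*)$, which is how this lemma will be used downstream in the proof of Theorem~\ref{Thm:Applied}.
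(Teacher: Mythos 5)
Your proposal is correct and follows essentially the same route as the paper: the paper writes $F(\bx^*,\bb^*)=\pi(\bx^*,\bb^*)+\inf_{\lambda}\{p(\lambda)-\lambda^T v(\bx^*,\bb^*)\}$ and shows that this infimum is attained at $\lambda^*$ by applying the subgradient inequality to $\zeta(\lambda):=p(\lambda)-v(\bx^*,\bb^*)^T\lambda$, which is exactly the Fenchel--Young equality you invoke in packaged form, and then identifies the result with $Q(\lambda^*)$ via $(\bx^*,\bb^*)\in S^*(\lambda^*)$. Your side remarks (closedness of $-u$ for the biconjugate step, and the fact that dual optimality of $\lambda^*$ is not needed for the equality itself) are accurate refinements of the same argument.
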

\begin{proof} 
Using that $Q(\lambda^*)=Q^*$, the optimality of $(\bx^*,\bb^*)$ and the definition of $F(\bx,\bb)$ we have:
	\begin{align}
	F(\textbf{x}^*,\textbf{b}^*) = \ \  & \pi(\bx^*,\bb^*) +\inf_{\lambda \in \calR^\calK}  \left\lbrace p(\lambda) -\lambda^T v(\bx^*,\bb^*) \right\rbrace \nonumber\\
	 Q(\lambda^*) = \ \  & \pi(\bx^*,\bb^*) + p(\lambda^*) - v(\bx^*,\bb^*)^T  \lambda^* \nonumber
	\end{align}
	We will show that $F(\textbf{x}^*,\textbf{b}^*) = Q(\lambda^*)$ by proving 
    $$\underset{\lambda \in \calR^\calK}{\inf} \left\lbrace p(\lambda) -\lambda^T v(\bx^*,\bb^*) \right\rbrace= p(\lambda^*) - v(\bx^*,\bb^*)^T \lambda^* $$ 
    Let's define the convex function $\zeta(\lambda) : = p(\lambda) - v(\bx^*,\bb^*)^T \lambda$, then  $\left( g- v(\bx^*,\bb^*) \right) \in \partial \zeta(\lambda^*)$ for any $g \in \partial p(\lambda^*)$. By hypothesis it exists $g' \in \partial p(\lambda^*)$, such that $g' - \sum_{i \in \calI} v_i(x_i^*,b_i^*) = 0$ then,  using the subgradient inequality:
    $$\zeta(\lambda) \ge \zeta(\lambda^*) + \left(g'- v(\bx^*,\bb^*) \right)^T (\lambda -\lambda^*) = \zeta(\lambda^*)$$
    Which shows that $\lambda^*$ is a minimizer of $\zeta(\cdot)$ concluding the proof.
\end{proof}

\begin{lemma}\label{Cor:SufSD}
If there exists $\lambda^*$ optimal dual solution,  such that the sets $\mathcal{V}_i := \{ v_i(\bx^*,\bb^*) | (x_i^*,b_i^*) \in S_i^*(\lambda^*)  \}$ are convex for all $i \in \calI$, then it exists $(\textbf{x}^*,\textbf{b}^*) \in \arg \max Q(\lambda^*)$, such that $F(\textbf{x}^*,\textbf{b}^*)= Q(\lambda^*)$.
\end{lemma}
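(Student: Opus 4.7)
The plan is to combine Lemma \ref{Lem:Danskins} with Lemma \ref{Lem:FirstND} by exploiting the first-order optimality condition $0 \in \partial Q(\lambda^*)$ and the convexity of each $\mathcal{V}_i$. Since $\lambda^*$ minimizes the convex function $Q(\cdot)$, the optimality condition says $0 \in \partial Q(\lambda^*)$. By Lemma \ref{Lem:Danskins}, any element of $\partial Q(\lambda^*)$ is a convex combination of vectors of the form $-\sum_{i \in \calI} v_i(x_i, b_i) + y$ with $(x_i, b_i) \in S_i^*(\lambda^*)$ and $y \in \partial p(\lambda^*)$.

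My next step is to express this convex combination in a useful form. I would write $0 = -\sum_{j} \alpha_j \sum_{i \in \calI} v_i(x_i^j, b_i^j) + \sum_j \alpha_j y_j$ where $\alpha_j \geq 0$ with $\sum_j \alpha_j = 1$, each $(x_i^j, b_i^j) \in S_i^*(\lambda^*)$, and $y_j \in \partial p(\lambda^*)$. Interchanging the order of summation in the first term yields $\sum_{i \in \calI} \big( \sum_j \alpha_j v_i(x_i^j, b_i^j) \big)$. By the convexity of $\mathcal{V}_i$ (the hypothesis of the lemma), the inner convex combination $\sum_j \alpha_j v_i(x_i^j, b_i^j)$ still lies in $\mathcal{V}_i$, so there exists $(x_i^*, b_i^*) \in S_i^*(\lambda^*)$ with $v_i(x_i^*, b_i^*) = \sum_j \alpha_j v_i(x_i^j, b_i^j)$. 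Likewise, $\partial p(\lambda^*)$ is a convex set (being the subdifferential of a convex function), so $y^* := \sum_j \alpha_j y_j$ belongs to $\partial p(\lambda^*)$.

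Concatenating these per-impression-type selections produces $(\bx^*, \bb^*) \in S^*(\lambda^*) = \arg\max Q(\lambda^*)$, and substituting into the zero-subgradient identity gives
\begin{equation*}
0 = -\sum_{i \in \calI} v_i(x_i^*, b_i^*) + y^* = -v(\bx^*, \bb^*) + y^*,
\end{equation*}
which is exactly the hypothesis $v(\bx^*, \bb^*) \in \partial p(\lambda^*)$ required by Lemma \ref{Lem:FirstND}. Applying that lemma concludes $F(\bx^*, \bb^*) = Q(\lambda^*)$, completing the proof.

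The main obstacle is the one I have just navigated: the element of $\partial Q(\lambda^*)$ certifying optimality is a priori only a convex combination across both the subdifferential of $p$ and the optimal-solution sets of the individual $\psi_i$'s, and a generic such combination need not correspond to a single primal point $(\bx^*, \bb^*)$. Convexity of each $\mathcal{V}_i$ is precisely what lets us collapse the per-type convex combinations back into a single point of $S_i^*(\lambda^*)$, and the convexity of $\partial p(\lambda^*)$ handles the dual side — together these two facts are what turn the abstract first-order condition into the concrete hypothesis needed by Lemma \ref{Lem:FirstND}.
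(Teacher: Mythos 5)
Your proof is correct and follows essentially the same route as the paper: invoke Lemma \ref{Lem:Danskins} together with $0 \in \partial Q(\lambda^*)$, use the convexity of each $\mathcal{V}_i$ (and of $\partial p(\lambda^*)$) to collapse the convex combination into a single point $(\bx^*,\bb^*) \in S^*(\lambda^*)$ with $v(\bx^*,\bb^*) \in \partial p(\lambda^*)$, and finish with Lemma \ref{Lem:FirstND}. The only difference is that you spell out the interchange-of-summation step that the paper leaves implicit.
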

\begin{proof}
If $\mathcal{V}_i$ is convex by definition we have $\mathcal{V}_i = conv(\mathcal{V}_i)$ for all $i \in \calI$ which would imply $\partial \sum_{i \in \calI} \psi_i(\lambda)= \sum_{i \in \calI} \mathcal{V}_i$. 
Lemma \ref{Lem:Danskins} tells us $-\partial \sum_{i \in \calI} \psi_i(\lambda) +\partial p(\lambda^*) = \partial Q(\lambda^*)$, and by the optimality of $\lambda^*$ we have that $0 \in \partial Q(\lambda^*)$. Using the convexity of the $\mathcal{V}_i$, there exists $(x_i^*,b_i^*) \in S_i^*(\lambda^*)$ for all $i \in \calI$ and $g \in \partial p(\lambda^* )$, such that if we call $(\bx^*,\bb^*)=((x_1^*,b_1^*),\dots,(x_{|\calI|}^*,b_{|\calI|}^*))$ we have $v(\bx^*,\bb^*) =g$ and then Lemma \ref{Lem:FirstND} holds.
\end{proof}

\begin{definition}[Condition: Unique Solution (US)]\label{Def:US}
We say that the dual variable $\lambda$ satisfy the unique solution condition if \\ $ \arg\max_{b \in [0,\bar{b}_i]}  h_i(r_{ik}(1-\lambda_k),b)$ is a singleton  whenever $\lambda_k<1$ for all $(i,k) \in \calE$.
\end{definition}

\begin{lemma}[Unique Solution] \label{Thm:UniqSoltn} Assume $\beta_i(b) \rho_i(b) >0$ for all $b \in (0,\bar{b}_i]$, then if $\lambda^*$ is an optimal dual solution that satisfies the Unique Solution condition, then there exists $(\textbf{x}^*,\textbf{b}^*) \in \arg \max Q(\lambda^*)$ such that $Q(\lambda^*) = F(\textbf{x}^*,\textbf{b}^*)$. 
\end{lemma}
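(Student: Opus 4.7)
The plan is to apply Lemma \ref{Cor:SufSD}, which reduces the task to showing that for each $i \in \calI$ the set $\mathcal{V}_i = \{v_i(x_i^*, b_i^*) : (x_i^*, b_i^*) \in S_i^*(\lambda^*)\}$ is convex; Lemma \ref{Cor:SufSD} will then immediately yield some optimal $(\bx^*, \bb^*)$ with $Q(\lambda^*) = F(\bx^*, \bb^*)$. So the entire technical content becomes a convexity statement about each $\mathcal{V}_i$.

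To establish this, I would first decouple the inner maximization over $b_{ik}$ from the outer allocation over $x_{ik}$, in the spirit of Proposition \ref{Prop:MaxQ}. Set $U_{ik}^* := \max_{b \in [0,\bar{b}_i]} h_i(r_{ik}(1 - \lambda_k^*), b)\, s_i$, let $M_i := \max_{k \in \calK_i} U_{ik}^*$, and define the ``winner set'' $K_i^* := \arg\max_{k \in \calK_i} U_{ik}^*$. An optimal $x_i$ must place all its mass on $K_i^*$, must satisfy $\sum_{k \in K_i^*} x_{ik} = 1$ when $M_i > 0$, and has $\sum_{k \in K_i^*} x_{ik} \in [0,1]$ when $M_i \le 0$. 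In either case the resulting feasible set of $x_i$ vectors is a convex polytope.

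Next I would argue that the map $x_i \mapsto v_i$ is linear on $S_i^*(\lambda^*)$ despite $b_i$ being formally free. Since $v_{ik} = r_{ik} s_i x_{ik} \rho_i(b_{ik})$ vanishes whenever $x_{ik} = 0$, the only $k$'s whose bids matter are those with $x_{ik} > 0$, which must lie in $K_i^*$ and satisfy $b_{ik} \in \arg\max_{b \in [0, \bar{b}_i]} h_i(r_{ik}(1-\lambda_k^*), b)$. When $\lambda_k^* < 1$, the Unique Solution hypothesis says this argmax is a singleton $\{b^*_{ik}\}$, so $\rho_i(b_{ik}) = \rho_i(b^*_{ik})$ is forced. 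When $\lambda_k^* \ge 1$ we have $h_i(r_{ik}(1-\lambda_k^*), b) \le -\beta_i(b)\rho_i(b)$, and the assumption $\beta_i(b)\rho_i(b) > 0$ for $b \in (0,\bar{b}_i]$, together with $\beta_i(0) = 0$ (forced by $0 \le \beta_i(b) \le b$), implies the maximum equals $0$ and is attained only at $b = 0$; in this regime $k$ can belong to $K_i^*$ only when $M_i \le 0$. Either way the coefficient $c_{ik} := r_{ik} s_i \rho_i(b_{ik})$ is a fixed nonnegative constant, and $v_i = \sum_{k \in K_i^*} c_{ik} x_{ik} e_k$ on $S_i^*(\lambda^*)$ (with $e_k$ the standard basis vector in $\mathbb{R}^{|\calK|}$). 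Thus $\mathcal{V}_i$ is the image of a convex set under a linear map and is therefore convex, and Lemma \ref{Cor:SufSD} applies.

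The main obstacle I expect is the careful bookkeeping in the $\lambda_k^* \ge 1$ branch, where Unique Solution is silent. The leverage is precisely the hypothesis $\beta_i(b)\rho_i(b) > 0$ for $b \in (0, \bar{b}_i]$, which combined with $\beta_i(b) \le b$ pins down $b = 0$ as the unique optimal bid in that regime. Without this extra hypothesis, different bid values could produce the same optimal $h_i$ while yielding different values of $\rho_i(b_{ik})$, destroying the linearity of $v_i$ along $S_i^*(\lambda^*)$ and therefore the convexity of $\mathcal{V}_i$.
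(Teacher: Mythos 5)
Your proposal is correct and follows essentially the same route as the paper's proof: reduce to convexity of each $\mathcal{V}_i$ via Lemma \ref{Cor:SufSD}, use the Unique Solution condition to pin down the bid (and hence $\rho_i(b_{ik})$) for every campaign with $\lambda_k^* < 1$, and use $\beta_i(b)\rho_i(b)>0$ to force $b=0$ as the unique optimal bid when $\lambda_k^* \ge 1$, so that each $\mathcal{V}_i$ is the linear image of a polytope of allocations. The only slip is in the degenerate regime where $\lambda_k^* > 1$ and $\rho_i(0) > 0$: there the optimal value of $h_i$ is strictly negative rather than $0$, so the optimal allocation is exactly $x_i = 0$ rather than $\sum_{k} x_{ik} \in [0,1]$ --- but then the true $\mathcal{V}_i = \{0\}$ is still convex, so your conclusion is unaffected.
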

\begin{proof}[Proof of Theorem \ref{Thm:UniqSoltn}]\label{Proof:UniqSoltn}
Let $\lambda^*$ be an optimal dual solution that satisfies the Unique Solution condition. Our goal here will be to show that sets $\mathcal{V}_i$ from Corollary \ref{Cor:SufSD} w.r.t. to $\lambda^*$ are convex for all $i \in \calI$ which would conclude this proof. Let $\calI= \calI' \cap \calI''$ with $\calI'$ the set of impression types such that $\psi_i(\lambda^*) >0$ for all $i \in \calI'$, and $i \in \calI''$ if $\psi_i(\lambda^*) =0$. Notice that we can assume $r_{ik}>0$ for all $(i,k) \in \calE$ as if it weren't case for some $(i,k) \in \calE$ it would be optimal to make $x_{ik}=0$ in our primal problem \eqref{old-poi} and the edge could have been removed from the problem definition. Then, using the latter, the continuity of functions $\rho_i(\cdot)$ and $\beta_i(\cdot)$ and that $\rho_i(b)\beta_i(b)>0$ for all $b>0$ and $i \in \calI$ an impression type $i$ is in $\calI''$ if and only if $\lambda_k \ge 1$ for all $k \in \calK_i$, and $b_{ik}(\lambda_k^*)=0$ is the unique maximizer of $\underset{b \in [0,\bar{b}_i]}{\max}$ $h_i(r_{ik}(1-\lambda),b)$ for all $k \in \calK_i$. For this case we have:
$$S_i(\lambda^*) = \left\lbrace (x_i,b_i) \in S_i \Big| \ \  \begin{aligned}
 x_{ik} b_{ik}=0     && \text{ if } \lambda_k^*=1 \\
  x_{ik} \rho_i(b_{ik})=0 && \text{ if } \lambda_k^*>1
\end{aligned}   \right\rbrace$$
Then, the set $\mathcal{V}_i$ from Lemma \ref{Cor:SufSD} is a convex set for any $i \in \calI''$ as it would be equal to the convex hull over a finite amount of points. This is easy to see as the $k^{th}$ coordinate of any of its vectors is equal to 0 if $\lambda_k^*>1$ or if $k \notin \calK_i$, and if $k \in \calK_i^*$ and $\lambda_k^*=1$ the general form of the $k^{th}$ coordinate of its vectors is $r_{ik}\rho_i(0)s_i x_{ik}$ and $\sum_{i \in \calK_i} x_{ik} \le 1$ with $x_{ik} \ge 0$ for all $k \in \calK_i$. For $i \in \calI'$ there is at least one $k \in \calK_i$ with $\lambda_k<1$ which implies $h_i(r_{ik}(1-\lambda_k^*),b_{ik}(\lambda^*))>0$ (the implication follows from the continuity of the bid landscape and $\beta_i(b)\le b$ for all $b>0$ and $i \in \calI$). Let's define $\calK_i^* \subseteq \calK_i$ as $ \calK_i^*: =\arg\underset{k \in \calK_i}{\max}\left\lbrace  h_i(r_{ik}(1-\lambda_k^*),b_{ik}(\lambda_k^*))  s_i   \right\rbrace$ for all $i \in \calI' $, then for any $i \in \calI'$:
\begin{align*}
S_i(\lambda^*) = \Big\lbrace (x_i,b_i) \in S_i \Big| \text{ } & \sum_{k\in \calK_i^*} x_{ik} =1, \text{ } x_{ik}=0 \text{ for all } k \notin \calK_i^*, \\
&  b_{ik} = b_{ik}(\lambda_k^*)  \text{ for all } k \in \calK_i^*  \Big\rbrace
\end{align*}
Making $\mathcal{V}_i$  a convex set for all $i \in \calI'$.
\end{proof}

Now we will use Lemma \ref{Thm:UniqSoltn} to prove Theorem \ref{Thm:Applied} concluding this section.

\begin{proof}[Proof Theorem \ref{Thm:Applied}] \label{ProofThmApplied}

First notice that the second condition of Theorem \ref{Thm:Applied} implies that $\beta_i(b)\rho_i(b)>0$ for all $i \in \calI$. Let $\rho(\cdot)$ and $\beta(\cdot)$ denote arbitrary bid landscape functions and $[0,\bar{b}]$ the bidding range for an arbitrary impression type. Let $r>0$ arbitrary represent and expected revenue, then for any $b \in (0,\bar{b}]$ we have:
\[   
\frac{\partial h(r,b)}{\partial b}:=h'(r,b)=\rho'(b)(r-g(b)) = 
     \begin{cases}
       h'(r,b)>0 & \text{ if } g(b)<r  \\
       h'(r,b)<0 & \text{ if } g(b)>r  \\
        h'(r,b)=0 & \text{ if } g(b)=r 
     \end{cases}
\] 
Notice that $\rho'(b)>0$ by hypothesis so $h'(r,b)$ sign is directly related to the term $(r-g(b))$. Let's use that $g(\cdot)$ is strictly increasing in $(0,\bar{b}]$ and define $\hat{b}(r) := \liminf \{b \in (0,\bar{b}]:\text{ }  g(r) > r\}$. Then, $b^*(r)= \arg\underset{b \in [0,\bar{b}]}{\max} $ $h(r,b)$ is equal to: 
\[   
b^*(r) = 
\begin{cases}
\bar{b} & \text{ if } g(\bar{b}) \le r  \\
\hat{b}(r) & \text{o.w.} 
\end{cases}
\] 
The previous can be understood in very simple words, given  that $g(\cdot)$ is strictly increasing in $(0,\bar{b}]$ if $g(\bar{b}) \le r$, then is optimal to bid the maximum possible which is $\bar{b}$ as $h'(r,b)$ is always strictly positive in $(0,\bar{b}]$. If $g(b) \ge r$ for all $b >0$ then $h'(r,b)<0$ for all $(0,\bar{b}]$ and therefore is optimal to bid 0 and by definition we would have $\hat{b}(r)=0$. Finally, if neither of the previous cases occurs it means that there was a change of sign of $h'(r,b)$ in $(0,\bar{b}]$ making it optimal to bid the value in which the change of sign occurs which is $\hat{b}(r)$.  Here we need to use $\lim\inf$ in the definition of $\hat{b}(r)$ as $g(\cdot)$ is not necessarily continuous. Notice that $\hat{b}(r)$ always exists and is unique by definition. Notice that we have proven that the Unique Solution would hold for any dual variables, and therefore for any optimal dual variable $\lambda^*$. Then, by Lemma \ref{Thm:UniqSoltn} holds we conclude this proof.
\end{proof}

\bibliographystyle{ACM-Reference-Format}
\bibliography{reference} 


\begin{thebibliography}{00}


\ifx \showCODEN    \undefined \def \showCODEN     #1{\unskip}     \fi
\ifx \showDOI      \undefined \def \showDOI       #1{{\tt DOI:}\penalty0{#1}\ }
  \fi
\ifx \showISBNx    \undefined \def \showISBNx     #1{\unskip}     \fi
\ifx \showISBNxiii \undefined \def \showISBNxiii  #1{\unskip}     \fi
\ifx \showISSN     \undefined \def \showISSN      #1{\unskip}     \fi
\ifx \showLCCN     \undefined \def \showLCCN      #1{\unskip}     \fi
\ifx \shownote     \undefined \def \shownote      #1{#1}          \fi
\ifx \showarticletitle \undefined \def \showarticletitle #1{#1}   \fi
\ifx \showURL      \undefined \def \showURL       #1{#1}          \fi
\providecommand\bibfield[2]{#2}
\providecommand\bibinfo[2]{#2}
\providecommand\natexlab[1]{#1}

\bibitem[\protect\citeauthoryear{Balseiro, Feldman, Mirrokni, and
  Muthukrishnan}{Balseiro et~al\mbox{.}}{2014}]%
        {balseiro2014yield}
\bibfield{author}{\bibinfo{person}{Santiago~R Balseiro}, \bibinfo{person}{Jon
  Feldman}, \bibinfo{person}{Vahab Mirrokni}, {and} \bibinfo{person}{S
  Muthukrishnan}.} \bibinfo{year}{2014}\natexlab{}.
\newblock \showarticletitle{Yield optimization of display advertising with ad
  exchange}.
\newblock \bibinfo{journal}{{\em Management Science\/}} \bibinfo{volume}{{60},
  12} (\bibinfo{year}{2014}), \bibinfo{pages}{2886--2907}.
\newblock


\bibitem[\protect\citeauthoryear{Boyd and Vandenberghe}{Boyd and
  Vandenberghe}{2004}]%
        {boyd2004convex}
\bibfield{author}{\bibinfo{person}{Stephen Boyd} {and} \bibinfo{person}{Lieven
  Vandenberghe}.} \bibinfo{year}{2004}\natexlab{}.
\newblock \bibinfo{booktitle}{{\em Convex optimization}}.
\newblock Cambridge university press.
\newblock


\bibitem[\protect\citeauthoryear{Chen, Yuan, and Wang}{Chen
  et~al\mbox{.}}{2014}]%
        {chen2014dynamic}
\bibfield{author}{\bibinfo{person}{Bowei Chen}, \bibinfo{person}{Shuai Yuan},
  {and} \bibinfo{person}{Jun Wang}.} \bibinfo{year}{2014}\natexlab{}.
\newblock \showarticletitle{A dynamic pricing model for unifying programmatic
  guarantee and real-time bidding in display advertising}. In
  \bibinfo{booktitle}{{\em Proceedings of the Eighth International Workshop on
  Data Mining for Online Advertising}}. ACM, \bibinfo{pages}{1--9}.
\newblock


\bibitem[\protect\citeauthoryear{Chen, Berkhin, Anderson, and Devanur}{Chen
  et~al\mbox{.}}{2011}]%
        {chen2011real}
\bibfield{author}{\bibinfo{person}{Ye Chen}, \bibinfo{person}{Pavel Berkhin},
  \bibinfo{person}{Bo Anderson}, {and} \bibinfo{person}{Nikhil~R Devanur}.}
  \bibinfo{year}{2011}\natexlab{}.
\newblock \showarticletitle{Real-time bidding algorithms for performance-based
  display ad allocation}. In \bibinfo{booktitle}{{\em Proceedings of the 17th
  ACM SIGKDD international conference on Knowledge discovery and data mining}}.
  ACM, \bibinfo{pages}{1307--1315}.
\newblock


\bibitem[\protect\citeauthoryear{Cui, Zhang, Li, and Mao}{Cui
  et~al\mbox{.}}{2011}]%
        {cui2011bid}
\bibfield{author}{\bibinfo{person}{Ying Cui}, \bibinfo{person}{Ruofei Zhang},
  \bibinfo{person}{Wei Li}, {and} \bibinfo{person}{Jianchang Mao}.}
  \bibinfo{year}{2011}\natexlab{}.
\newblock \showarticletitle{Bid landscape forecasting in online ad exchange
  marketplace}. In \bibinfo{booktitle}{{\em Proceedings of the 17th ACM SIGKDD
  international conference on Knowledge discovery and data mining}}. ACM,
  \bibinfo{pages}{265--273}.
\newblock


\bibitem[\protect\citeauthoryear{Grigas, Lobos, Wen, and Lee}{Grigas
  et~al\mbox{.}}{2017}]%
        {grigas2017profit}
\bibfield{author}{\bibinfo{person}{Paul Grigas}, \bibinfo{person}{Alfonso
  Lobos}, \bibinfo{person}{Zheng Wen}, {and} \bibinfo{person}{Kuang-chih Lee}.}
  \bibinfo{year}{2017}\natexlab{}.
\newblock \showarticletitle{Profit Maximization for Online Advertising
  Demand-Side Platforms}. In \bibinfo{booktitle}{{\em Proceedings of the
  ADKDD'17}} \bibinfo{series}{{\em (ADKDD'17)}}. ACM, New York, NY, USA,
  Article \bibinfo{articleno}{11}, 7 pages.
\newblock
\showISBNx{978-1-4503-5194-2}
\showDOI{%
\url{http://dx.doi.org/10.1145/3124749.3124761}}


\bibitem[\protect\citeauthoryear{Gurobi~Optimization}{Gurobi~Optimization}{2016}]%
        {gurobi}
\bibfield{author}{\bibinfo{person}{Inc. Gurobi~Optimization}.}
  \bibinfo{year}{2016}\natexlab{}.
\newblock \bibinfo{title}{Gurobi Optimizer Reference Manual}.
\newblock   (\bibinfo{year}{2016}).
\newblock
\showURL{%
\url{http://www.gurobi.com}}


\bibitem[\protect\citeauthoryear{Lee, Jalali, and Dasdan}{Lee
  et~al\mbox{.}}{2013}]%
        {lee2013real}
\bibfield{author}{\bibinfo{person}{Kuang-Chih Lee}, \bibinfo{person}{Ali
  Jalali}, {and} \bibinfo{person}{Ali Dasdan}.}
  \bibinfo{year}{2013}\natexlab{}.
\newblock \showarticletitle{Real time bid optimization with smooth budget
  delivery in online advertising}. In \bibinfo{booktitle}{{\em Proceedings of
  the Seventh International Workshop on Data Mining for Online Advertising}}.
  ACM, \bibinfo{pages}{1}.
\newblock


\bibitem[\protect\citeauthoryear{Perlich, Dalessandro, Hook, Stitelman, Raeder,
  and Provost}{Perlich et~al\mbox{.}}{2012}]%
        {perlich2012bid}
\bibfield{author}{\bibinfo{person}{Claudia Perlich}, \bibinfo{person}{Brian
  Dalessandro}, \bibinfo{person}{Rod Hook}, \bibinfo{person}{Ori Stitelman},
  \bibinfo{person}{Troy Raeder}, {and} \bibinfo{person}{Foster Provost}.}
  \bibinfo{year}{2012}\natexlab{}.
\newblock \showarticletitle{Bid optimizing and inventory scoring in targeted
  online advertising}. In \bibinfo{booktitle}{{\em Proceedings of the 18th ACM
  SIGKDD international conference on Knowledge discovery and data mining}}.
  ACM, \bibinfo{pages}{804--812}.
\newblock


\bibitem[\protect\citeauthoryear{Ren, Zhang, Chang, Rong, Yu, and Wang}{Ren
  et~al\mbox{.}}{2018}]%
        {ren2018bidding}
\bibfield{author}{\bibinfo{person}{Kan Ren}, \bibinfo{person}{Weinan Zhang},
  \bibinfo{person}{Ke Chang}, \bibinfo{person}{Yifei Rong},
  \bibinfo{person}{Yong Yu}, {and} \bibinfo{person}{Jun Wang}.}
  \bibinfo{year}{2018}\natexlab{}.
\newblock \showarticletitle{Bidding Machine: Learning to Bid for Directly
  Optimizing Profits in Display Advertising}.
\newblock \bibinfo{journal}{{\em IEEE Transactions on Knowledge and Data
  Engineering\/}} \bibinfo{volume}{{30}, 4} (\bibinfo{year}{2018}),
  \bibinfo{pages}{645--659}.
\newblock


\bibitem[\protect\citeauthoryear{Wang}{Wang}{2017}]%
        {wang2017vanishing}
\bibfield{author}{\bibinfo{person}{Mengdi Wang}.}
  \bibinfo{year}{2017}\natexlab{}.
\newblock \showarticletitle{Vanishing Price of Decentralization in Large
  Coordinative Nonconvex Optimization}.
\newblock \bibinfo{journal}{{\em SIAM Journal on Optimization\/}}
  \bibinfo{volume}{{27}, 3} (\bibinfo{year}{2017}),
  \bibinfo{pages}{1977--2009}.
\newblock


\bibitem[\protect\citeauthoryear{Xu, Lee, Li, Qi, and Lu}{Xu
  et~al\mbox{.}}{2015}]%
        {xu2015smart}
\bibfield{author}{\bibinfo{person}{Jian Xu}, \bibinfo{person}{Kuang-chih Lee},
  \bibinfo{person}{Wentong Li}, \bibinfo{person}{Hang Qi}, {and}
  \bibinfo{person}{Quan Lu}.} \bibinfo{year}{2015}\natexlab{}.
\newblock \showarticletitle{Smart pacing for effective online ad campaign
  optimization}. In \bibinfo{booktitle}{{\em Proceedings of the 21th ACM SIGKDD
  International Conference on Knowledge Discovery and Data Mining}}. ACM,
  \bibinfo{pages}{2217--2226}.
\newblock


\bibitem[\protect\citeauthoryear{Zhang, Yuan, and Wang}{Zhang
  et~al\mbox{.}}{2014a}]%
        {zhang2014optimal}
\bibfield{author}{\bibinfo{person}{Weinan Zhang}, \bibinfo{person}{Shuai Yuan},
  {and} \bibinfo{person}{Jun Wang}.} \bibinfo{year}{2014}\natexlab{a}.
\newblock \showarticletitle{Optimal real-time bidding for display advertising}.
  In \bibinfo{booktitle}{{\em Proceedings of the 20th ACM SIGKDD international
  conference on Knowledge discovery and data mining}}. ACM,
  \bibinfo{pages}{1077--1086}.
\newblock


\bibitem[\protect\citeauthoryear{Zhang, Yuan, Wang, and Shen}{Zhang
  et~al\mbox{.}}{2014b}]%
        {zhang2014real}
\bibfield{author}{\bibinfo{person}{Weinan Zhang}, \bibinfo{person}{Shuai Yuan},
  \bibinfo{person}{Jun Wang}, {and} \bibinfo{person}{Xuehua Shen}.}
  \bibinfo{year}{2014}\natexlab{b}.
\newblock \showarticletitle{Real-Time Bidding Benchmarking with iPinYou
  Dataset}.
\newblock \bibinfo{journal}{{\em arXiv preprint arXiv:1407.7073\/}}
  (\bibinfo{year}{2014}).
\newblock


\end{thebibliography}

\end{document}